\numberwithin{equation}{section}
\newtheorem{theorem}{Theorem}[section]
\newtheorem{lemma}[theorem]{Lemma}
\newtheorem{proposition}[theorem]{Proposition}
\newtheorem{remark}[theorem]{Remark}
\newenvironment{proof of theorem 6.2}{{\it Proof of Theorem 6.2}.}{{\hfill $\square$%
    \hskip - \parfillskip}}
\begin{document}

\setcounter{page}{1}
\title{ Isoperimetric inequalities for eigenvalues \\by inverse mean curvature flow }
\author{Fangcheng Guo$^1$, Guanghan Li$^2$, and Chuanxi Wu$^1$}
\date{}
\protect\footnotetext{\!\!\!\!\!\!\!\!\!\!\!\!\! {\bf MSC 2000:}
53C40, 53C44, 35k55.
\\
{\bf ~~Key Words:} Laplacian operator, isoperimetric inequality, inverse mean curvature flow.\\
The research is partially supported by NSFC (no. 11171096),
RFDP (no. 20104208110002), and Funds for Disciplines Leaders
of Wuhan (no. Z201051730002).}
\maketitle ~~~\\[-15mm]
\begin{center}{\footnotesize 1. School of Mathematics and Statistics, Hubei University, Wuhan, 430062, P. R. China
\\2. School of Mathematics and Statistics, Wuhan University, Wuhan, 430072, P. R. China
\\E-mail: gfch2006@163.com, ghli@whu.edu.cn, cxwu@hubu.edu.cn}
\end{center}
\begin{abstract} \noindent
By studying the monotonicity of the first nonzero eigenvalues of Laplace and $p$-Laplace operators on a closed convex hypersurface $M^n$ which evolves under inverse mean curvature flow in $\mathbb{R}^{n+1}$,  the isoperimetric lower bounds for both eigenvalues were founded.

\end{abstract}

\markright{\sl\hfill \hfill}

\section{Introduction}

The isoperimetric type problem, which is always expressed as whether an inequality about some geometric quantities achieves optimization in the geodesic sphere case among a class of bounded domains in Riemannian manifolds, plays an important role in geometry \cite{Wang-Xia}. A basic theme of it is to generalize the classic isoperimetric inequalities in a Euclidean space to a higher dimensional Riemannian manifold, especially to a space form (cf. \cite{Chavel}).

Another significant theme of it is isoperimetric inequalities for eigenvalues of Laplace and $p$-Laplace. Rayleigh, as the first one studying this problem, posed the famous conjecture \cite{Rayleigh}: \emph{Let $\Omega\subset \mathbb{R}^n$ be a bounded domain in $\mathbb{R}^n$, $B$ be some ball in $\mathbb{R}^n$ which has the same volume with $\Omega$, $\lambda_1(\Omega)$ and $\lambda_1(B)$ be the first nonzero Dirichlet eigenvalues of $\Omega$ and $B$, respectively, then
$$
\lambda_1(\Omega)\geq\lambda_1(B),
$$
with equality holding iff $\Omega=B$.
}

This conjecture has been settled by Faber and Krahn \cite{Faber,Krahn}, well known as Rayleigh-Faber-Krahn inequality. This problem for the $p$-Laplace operator is solved by Bhattacharya \cite{Bhattacharya}. The corresponding problem to the first nonzero Neumann eigenvalue, called free membrane problem, is solved by Szeg\"{o} and Weinberger \cite{Weinberger}, but with an opposite sign of inequality, known as Szeg\"{o}-Weinberger inequality. A natural problem is whether there exist similar inequalities for the closed eigenvalue problem, namely, the eigenvalue for compact hypersurface without boundary embedded in $\mathbb{R}^{n+1}$. This is one of motivations of this paper. Recently, Wang and Xia \cite{Wang Q} proved that for a closed hypersurface $M^{n}$ embedded in $\mathbb{R}^{n+1}$ which encloses a bounded domain $\Omega$, then
\begin{eqnarray*}
\lambda_1(M)\leq\frac{n\textmd{vol}(M)}{(n+1)\textmd{vol}(\Omega)}\left(\frac{\omega_{n+1}}{\textmd{vol}(\Omega)}\right)^{\frac{1}{n+1}},
\end{eqnarray*}
with equality holding iff $M$ is an $n$-sphere, where $\textmd{vol}(M)$ and $\textmd{vol}(\Omega)$ denote respectively the area of the closed hypersurface $M$ and the volume of the domain $\Omega$, and $\omega_{n+1}$ is the volume of a unit $(n+1)$-sphere.

In 2012, for the same case, Santhanam \cite{SANTHANAM} proved that
\begin{eqnarray*}
\frac{\lambda_1(M^n)}{\lambda_1(\mathbb{S}^n(R))}\leq\left(\frac{\textmd{vol}(M^n)}{\textmd{vol}(\mathbb{S}^n(R))}\right)^2,
\end{eqnarray*}
where $R>0$ is such that $\textmd{vol}(\Omega) = \textmd{vol}(B(R))$, and also equality holding iff $M$ is an $n$-sphere. Subsequently, Binoy and Santhanam generalized this result in \cite{Binoy} to the hypersurface in Riemannian manifold with bounded sectional curvature. It is noting that all of above results are obtained by the traditional method of centers of gravity, which seems an effective way to treat the upper bound of the first eigenvalue.

In this paper, we shall study this problem to obtain a lower bound for the first nonzero eigenvalues of $p$-Laplace and Laplace of closed hypersurfaces using a new geometric inequality which is preserved under inverse mean curvature flow.

It is known that various mean curvature flows, such as inverse mean curvature flow \cite{Gerhardt Rn,Urbas}, forced mean curvature flow \cite{Li-Sa}, are effective tools to study isoperimetric problems, as they, or after rescaled, always deform hypersurfaces to a sphere maintaining some feature. Recently, the quermassintegral isoperimetric inequalities have been studied intensively by inverse mean curvature flow, see \cite{Ge-Wang1,Ge-Wang2,guan ineq,Wang-Xia}. Meanwhile, many authors investigated the property of the first nonzero eigenvalues of the Laplace and $p$-Laplace operators under various mean curvature flows. Firstly,
 Zhao~\cite{zhao09} derived the evolution equation of the first nonzero Laplace eigenvalue under Huisken's compact contracting mean curvature flow, and obtained some interesting monotonic quantities. Subsequently, he \cite{zhao12} proved the differentiability of the first eigenvalue of $p$-Laplace under powers of $m$-th mean curvature flow, and also obtained some interesting monotonic quantities related to it. Recently, Mao~\cite{Mao} proved the differentiability for the first eigenvalue of $p$-Laplace under forced mean curvature flow, and obtained the monotonicity of the first nonzero eigenvalues for both Laplace and $p$-Laplace operators along the forced mean curvature flow under some assumptions.

In this paper, by considering the monotonicity of the first nonzero eigenvalues of Laplace and $p$-Laplace operators on the closed hypersurface evolving under inverse mean curvature flow, we obtain following isoperimetric inequalities.

\begin{theorem}\label{th1.1}
Let $M^n$ be a closed hypersurface embedded in $\mathbb{R}^{n+1}$, satisfying $H> 0$ and $h_{ij}\geq\frac{\alpha}{2} H g_{ij}$ for a constant $\alpha \in [0, \frac{2}{n}]$. Assume $\lambda_{1,p}(M^n)$ and  $\lambda_{1}(M^n)$ are the first nonzero closed eigenvalues of $p$-Laplace and Laplace operators respectively on the hypersurface $M^n$, then for some sphere $\mathbb{S}^n(R)$ embedded in $\mathbb{R}^{n+1}$ such that $\textmd{vol}(\mathbb{S}^n(R))=\textmd{vol}(M^n)$, we have
$$
\lambda_{1,p}(M^n)\geq C^{-1}(n,p,\alpha)\lambda_{1,p}(\mathbb{S}^n(R)), ~~~\text{where}~~~ C(n,p,\alpha)=\exp\left[\frac{p}{\alpha}\left(\frac{1}{n}-\frac{\alpha}{2}\right)\right].
$$
When $p=2$
$$
\lambda_{1}(M^n)\geq C^{-1}(n,\alpha)\lambda_{1}(\mathbb{S}^n(R)), ~~~\text{where}~~~ C(n,\alpha)=\exp\left[\frac{2}{\alpha}\left(\frac{1}{n}-\frac{\alpha}{2}\right)\right].
$$
Furthermore, equality holds in the two inequalities above iff M is an $n$-sphere of radius $R$.
\end{theorem}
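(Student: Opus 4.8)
The plan is to run the inverse mean curvature flow $\partial_t X = \frac{1}{H}\nu$ starting from $M^n = M_0$, show that the pinching condition $h_{ij} \ge \frac{\alpha}{2} H g_{ij}$ is preserved, and track how the normalized first eigenvalue evolves. Under IMCF the area grows as $\frac{d}{dt}\mathrm{vol}(M_t) = \mathrm{vol}(M_t)$, so $\mathrm{vol}(M_t) = e^t \mathrm{vol}(M_0)$, and it is classical (Gerhardt \cite{Gerhardt Rn}, Urbas \cite{Urbas}) that for a convex — here, $\alpha$-pinched — initial hypersurface the flow exists for all time and, after rescaling by $e^{-t/n}$, converges smoothly to a round sphere. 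I would therefore compare $M_0$ to its "limit sphere": as $t\to\infty$ the rescaled surfaces converge to some $\mathbb{S}^n(R_\infty)$, and since the area of $\mathbb{S}^n(R)$ is what we match to $\mathrm{vol}(M_0)$, a rescaling argument relates $R$ to $R_\infty$ and the total "elapsed flow time" needed to reach a sphere of the same volume.

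The analytic core is the evolution inequality for $\lambda_{1,p}(M_t)$. First I would establish differentiability of $t\mapsto \lambda_{1,p}(M_t)$ along the flow — following the approach of Zhao \cite{zhao12} and Mao \cite{Mao}, one fixes a smooth family of test functions obtained from the eigenfunctions and uses the variational characterization $\lambda_{1,p} = \inf \int |\nabla u|^p / \int |u|^p$ over $u$ with $\int |u|^{p-2}u = 0$ (or the appropriate normalization), to get a one-sided derivative bound, then upgrades to differentiability a.e. Then I would compute $\frac{d}{dt}\log \lambda_{1,p}(M_t)$. Under IMCF the metric evolves by $\partial_t g_{ij} = \frac{2}{H} h_{ij}$, so $\partial_t g^{ij} = -\frac{2}{H} h^{ij}$ and $\partial_t d\mu = d\mu$. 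Plugging the eigenfunction $u$ (with $\int |u|^p d\mu$ normalized) into the Rayleigh quotient and differentiating, the volume terms $\partial_t d\mu = d\mu$ cancel between numerator and denominator, leaving a term of the form $-\frac{p}{2}\int \frac{h^{ij}\nabla_i u \nabla_j u}{H}|\nabla u|^{p-2} / \int |u|^p$. The pinching hypothesis $h_{ij}\ge \frac{\alpha}{2}H g_{ij}$ on one side, and the constraint $h_{ij}\le (1-(n-1)\frac{\alpha}{2})H g_{ij}$ coming from $\alpha\le 2/n$ on the other (i.e. all principal curvatures lie in $[\frac{\alpha}{2}H,\ (1-(n-1)\frac{\alpha}{2})H]$), give two-sided bounds
$$
\frac{\alpha}{2}\,\lambda_{1,p}(M_t)\ \le\ -\frac{d}{dt}\log\lambda_{1,p}(M_t)\ \le\ \Bigl(\frac{1}{n}-\frac{(n-1)\alpha}{2} + \text{correction}\Bigr)\lambda_{1,p}(M_t),
$$
and more usefully a differential inequality of the form $\frac{d}{dt}\log\lambda_{1,p}(M_t) \ge -p\bigl(\frac1n - \frac{\alpha}{2}\bigr)$ — a \emph{constant} lower bound, which is exactly what produces the exponential factor $C(n,p,\alpha)=\exp[\frac{p}{\alpha}(\frac1n-\frac\alpha2)]$ after integrating over the finite time $T = \frac{2}{\alpha}\cdot\frac{1}{?}$… (the total time being pinned down by the area-matching condition $\mathrm{vol}(M_0)=\mathrm{vol}(\mathbb{S}^n(R))$ together with $\mathrm{vol}(M_t)=e^t\mathrm{vol}(M_0)$ and the sphere-convergence).

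Concretely, I would argue: along IMCF, $\lambda_{1,p}(M_t)$ satisfies $\lambda_{1,p}(M_t)\ge \lambda_{1,p}(M_0)\,e^{-p(\frac1n-\frac\alpha2)t}$ as long as the pinching persists; but the pinching is preserved for all $t\ge 0$ (this needs its own maximum-principle argument on the evolution of $h_{ij}-\frac\alpha2 Hg_{ij}$, or one cites the convex case $\alpha=0$ and interpolates), and the flow converges after rescaling to $\mathbb{S}^n$; comparing at the time $t^\ast$ where a genuine round sphere of the prescribed volume appears in the rescaled picture, and using that $\lambda_{1,p}(\mathbb{S}^n(r))$ scales like $r^{-p}$ together with $\mathrm{vol}(\mathbb{S}^n(r))\sim r^n$, one gets $\lambda_{1,p}(\mathbb{S}^n(R)) = \lim_{t\to\infty}\lambda_{1,p}(M_t)\,e^{(\text{scaling exponent})t}$; matching exponents $\frac1n$ vs $\frac\alpha2$ yields $C(n,p,\alpha)$. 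For $p=2$ the same computation with $|\nabla u|^{p-2}=1$ gives $C(n,\alpha)=\exp[\frac2\alpha(\frac1n-\frac\alpha2)]$.

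The main obstacle I expect is twofold. First, making the differentiability of $\lambda_{1,p}(M_t)$ rigorous for $p\ne 2$ — the $p$-Laplace eigenvalue need not be simple and the eigenfunction is only $C^{1,\beta}$, so one must work with the Rayleigh-quotient upper/lower Dini derivatives and a careful choice of extended test functions, exactly as in \cite{zhao12,Mao}. Second, and more essential, pinning down the precise total flow time / scaling factor that converts the constant differential inequality into the sharp constant $\exp[\frac{p}{\alpha}(\frac1n-\frac\alpha2)]$: this requires combining the area growth $\mathrm{vol}(M_t)=e^t\mathrm{vol}(M_0)$, the long-time convergence to a round sphere from \cite{Gerhardt Rn,Urbas}, and the scaling behaviour of $\lambda_{1,p}$ on spheres, and it is where the restriction $\alpha\le 2/n$ and the appearance of $\frac1n$ (the round-sphere value of $h_{ij}/(Hg_{ij})$ times $n$) enter. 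The equality case then falls out because equality in the differential inequality forces $h_{ij}\equiv\frac\alpha2 Hg_{ij}$ with $\alpha=2/n$ at every time, i.e. $M_t$ (hence $M_0$) is totally umbilic, so a round sphere of radius $R$.
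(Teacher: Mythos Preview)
Your overall strategy---run the inverse mean curvature flow, preserve the pinching, track the eigenvalue, compare with the limiting sphere---matches the paper's. But there is a genuine gap in the analytic core, and it is precisely where you hesitate (``integrating over the finite time $T=\frac{2}{\alpha}\cdot\frac{1}{?}$\dots'').

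First, the differential inequality you write down is not the one that follows from the hypothesis. Under (unrescaled) IMCF one has
\[
\frac{d}{dt}\lambda_{1,p}(t)=-p\int_{M_t}\frac{1}{H}\,|\nabla u|^{p-2}h^{ij}u_iu_j\,d\mu,
\]
so the lower pinching $h_{ij}\ge\frac{\alpha}{2}Hg_{ij}$ gives the \emph{upper} bound $\frac{d}{dt}\log\lambda_{1,p}\le -p\frac{\alpha}{2}$, while the upper pinching $h_{ij}\le\bigl(1-(n-1)\frac{\alpha}{2}\bigr)Hg_{ij}$ gives $\frac{d}{dt}\log\lambda_{1,p}\ge -p\bigl(1-(n-1)\frac{\alpha}{2}\bigr)$. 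Neither is your claimed $-p(\frac{1}{n}-\frac{\alpha}{2})$. What you actually need is the \emph{rescaled} evolution: with $\tilde X=e^{-t/n}X$ one gets
\[
\frac{d}{dt}\log\tilde\lambda_{1,p}(t)\ \le\ p\Bigl(\frac{1}{n}-\varepsilon\Bigr),
\]
and then one must let $t\to\infty$ so that $\tilde M_t\to\mathbb{S}^n(R)$ with the correct radius (volume is preserved under the rescaled flow). There is no finite stopping time at which the flow is exactly a sphere.

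This is where the real problem lies: with the \emph{constant} $\varepsilon=\frac{\alpha}{2}$ that you propose to use, the right-hand side $p(\frac{1}{n}-\frac{\alpha}{2})$ is a strictly positive constant whenever $\alpha<\frac{2}{n}$, so integrating over $[0,\infty)$ yields only $\tilde\lambda_{1,p}(\mathbb S^n(R))\le\tilde\lambda_{1,p}(M^n)\cdot e^{+\infty}$, which is vacuous. The missing idea---and this is the heart of the paper's proof---is that the pinching \emph{improves} along the flow: one shows via the tensor maximum principle that
\[
h_{ij}\ \ge\ \varepsilon(t)\,H\,g_{ij},\qquad \varepsilon(t)=\frac{1}{n}-\Bigl(\frac{1}{n}-\frac{\alpha}{2}\Bigr)e^{-\alpha t},
\]
is preserved for all $t\ge0$ (Theorem~3.7 in the paper). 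With this time-dependent $\varepsilon(t)$ one has $\frac{1}{n}-\varepsilon(t)=(\frac{1}{n}-\frac{\alpha}{2})e^{-\alpha t}$, and now
\[
\int_0^\infty\Bigl(\frac{1}{n}-\varepsilon(t)\Bigr)\,dt=\frac{1}{\alpha}\Bigl(\frac{1}{n}-\frac{\alpha}{2}\Bigr)
\]
is finite and produces exactly $C(n,p,\alpha)=\exp\bigl[\frac{p}{\alpha}(\frac{1}{n}-\frac{\alpha}{2})\bigr]$. Without this improving-pinching step your argument cannot close, and the specific constant in the theorem has no other source.
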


The rest of this paper is organized as follows: In section 2, we collect some fundamental facts of eigenvalues and
 then derive the evolutions of the first nonzero eigenvalues of $p$-Laplace and Laplace under the general curvature evolution of hypersurfaces in Euclidean spaces. In section 3, We especially consider the case of inverse mean curvature flow and get a new geometric inequality preserving under the flow, which plays a key role in the proof of the main result. In the last section, we prove the monotonicity of the first nonzero eigenvalues and complete the proof of the main theorem by using the inequality obtained in section 3.

\section{Evolutions of $\lambda_1(t)$ and $\lambda_{1,p}(u,t)$ }

Let $(M^n,g)$ be an $n$-dimensional compact Riemannian manifold, $\nabla$ and $\textmd{div}$ denote the covariant derivative and divergence operator on $M^n$, respectively. Under local coordinates  $\{x_{1},\cdots,x_{n}\}$, the $p$-Laplace operator for some smoothly function $u=u(x)$ on $M^n$ is defined as
\begin{eqnarray*}
\Delta_{p}u=\textmd{div}(|\nabla u|^{p-2}\nabla u)=\frac{1}{\sqrt{\det(g_{ij})}}\sum\limits_{i,j=1}^{n}\frac{\partial}{\partial{x_{i}}}\left(\sqrt{\det(g_{ij})}g^{ij}|\nabla{u}|^{p-2}
\frac{\partial{u}}{\partial{x_{j}}}\right),
\end{eqnarray*}
where $|\nabla{u}|^{2}=g^{ij}\nabla_iu\nabla_iu.$

If there exists some non-trivial smooth function $u$ defined on $M^n$ satisfying
\begin{eqnarray*}
\triangle_pu+\lambda(M^n)|u|^{p-2}u=0
\end{eqnarray*}
for some positive constant $\lambda(M^n)$, we call $\lambda(M^n)$ the eigenvalue of the the $p$-Laplace operator on $M^n$. It is known that the $p$-Laplace operator has discrete eigenvalues, although whether it only has discrete eigenvalue is still unknown when $p\neq2$ \cite{Mao}. Its fist nonzero eigenvalue is denoted by $\lambda_{1,p}(M^n)$ here.

The first nonzero eigenvalue of the the $p$-Laplace operator on $M^n$ can also be defined as
\begin{equation}\label{df1}
\lambda_{1,p}(M^n)=\min_{0\neq u\in
W^{1,p}_{0}(M^n)}\frac{\int_{M^n}|\nabla
u|^{p}d\mu}{\int_{M^n}|u|^{p}d\mu},
\end{equation}
where $W^{1,p}_{0}(M)$ denotes the Sobolev space given by the closure of $C^{\infty}$ functions with compact support on $M^n$ for the norm
$$\|u\|^{p}_{1,p}=\int_{M^n}|u|^{p}d\mu+\int_{M^n}|\nabla u|^{p}d\mu.$$

When $p=2$, above all become the standard definitions about general Laplace operator, the first nonzero eigenvalue for it is denoted by $\lambda_1(M^n)$ below.

For an $n$-dimensional smooth and compact manifold $\Sigma^n$, let $X_0: \Sigma^n\rightarrow \mathbb{N}^{n+1}(K)$ be a smooth immersion, where $\mathbb{N}^{n+1}(K)$ is a space form of constant sectional curvature $K$. We consider a family of smooth immersions $X: \Sigma^n\times [0,T)\rightarrow \mathbb{N}^{n+1}(K)$ satisfying the following curvature evolution
\begin{equation}\label{gf}
\frac{\partial X}{\partial t}=f(X(\cdot,t))\nu, \qquad X(\cdot, 0)=X_0(\cdot), \qquad t\in[0,T),
\end{equation}
where $f$ is a curvature function of points $X(\cdot, t)$ on the evolving hypersurface $M^n_t=X(\Sigma^n, t)$ with $M^n=X(\Sigma^n, 0)$, $\nu$ is the outward unit normal vector field , and $I$ is the maximal existence interval.

In the following, $g_{ij}$ denotes the induced metric on $M^n_t$, $H$ denotes  mean curvature, $h_{ij}$, $|A|$ are respectively the second fundamental form and its norm, and $d\mu$ denotes the volume element on $M^n_t$.

We denote by $\lambda_{1,p}(t)$ and $\lambda_1(t)$ the first nonzero eigenvalues of $p$-Laplace and Laplace operators respectively on the evolving hypersurface $M^n_t$. In order to discuss the monotonicity of $\lambda_{1,p}(t)$ and $\lambda_1(t)$ under the inverse mean curvature flow \eqref{flow1}, we need calculate the evolution equations. But unfortunately, we don't known whether $\lambda_{1,p}(t)$ is differentiable under this flow or not when $p\neq2$, so a similar method to one in \cite{cao,cao1,Mao,zhao12} will be used to avoid this  difficulty. Precisely, we define a smooth function
\begin{equation}\label{def2.7}
\begin{split}
\lambda_{1,p}(u,t)=-\int_{M^n_t}\Delta_{p}u(x,t)\cdot{u(x,t)}d\mu=\int_{M^n_t}|\nabla{u}|^{p}d\mu
\end{split}
\end{equation}
on the evolving hypersurface $M^n_t$ whenever the flow \eqref{gf} exists, where $u=u(x,t)$ is an arbitrary smooth function satisfying
$$\int_{M^n_t}|u|^pd\mu=1,~\qquad \int_{M^n_t}|u|^{p-2}ud\mu=0.$$

If $u=u(x,t)$ is the eigenfunction of $\lambda_{1,p}(t)$, then we have
\begin{eqnarray*}
\lambda_{1,p}(u,t)=-\int_{M^n_t}\Delta_{p}u(x,t)\cdot{u(x,t)d\mu}=\lambda_{1,p}(t)\int_{M^n_t}|u(x,t)|^{p}d\mu=\lambda_{1,p}(t).
\end{eqnarray*}
From the definition of $\lambda_{1,p}(t)$ in \eqref{df1} and $\lambda_{1,p}(u,t)$ in \eqref{def2.7}, we have $\lambda_{1,p}(u,t)\geq\lambda_{1,p}(t)$, which is an important fact below.

Zhao in \cite{zhao09} derived the evolution equation for $\lambda_1(t)$ under the convex contracting mean curvature flow in Euclidean space $\mathbb{R}^{n+1}$, and also for $\lambda_{1,p}(u,t)$ under the powers of $m$-th mean curvature flow in $\mathbb{R}^{n+1}$ in detail. Mao in \cite{Mao} gave the evolution equations for both $\lambda_1(t)$ and $\lambda_{1,p}(u,t)$ under the forced mean curvature flow in $\mathbb{R}^{n+1}$. From their process, we find that only the evolution for the induced metric on the evolving hypersurface and the Codazzi property for the second fundamental form is used. As a result, we will derive both of them on the  hypersurface $M^n_t$ embedded in space form $\mathbb{N}^{n+1}(K)$ evolving under a general flow $\frac{\partial X}{\partial t}=f(X(\cdot,t))\nu$ by a similar method.

\begin{proposition}\label{th1.3}
Let $\lambda_{1,p}(t)$ and  $\lambda_{1}(t)$ be respectively the first nonzero closed eigenvalues of $p$-Laplace and Laplace operators on the $n$-dimensional compact hypersurface $M_t$ which evolved by flow~\eqref{gf} in a space form $\mathbb{N}^{n+1}(K)$. Assume that $u=u(x, t)$ is the corresponding eigenfunction of $\lambda_{1,p}(t)$ at time $t\in I$ satisfying $\int_{M_{t}}|u|^{p}d\mu=1$, and $\lambda_{1,p}(u,t)$ is a smooth function defined in~\eqref{def2.7}. Then at time $t$, we have

\begin{eqnarray}\label{evp}
\frac{d}{dt}\lambda_{1,p}(u,t)=-p\int_{M_t}f|\nabla
u|^{p-2}u_{i}u_{j}h^{ij}d\mu+\int_{M_{t}}fH |\nabla
u|^{p}d\mu-\lambda_{1,p}(t)\int_{M_{t}}fH|u|^{p}d\mu.
\end{eqnarray}
When $p=2$, let $u=u(x, t)$ be the eigenfunction of $\lambda_{1}(t)$ on $M_t$, we have
\begin{eqnarray}\label{ev2}
\frac{d}{dt}\lambda_{1}(t)=-2\int_{M_t}fu_{i}u_{j}h^{ij}d\mu+\int_{M_{t}}fH
|\nabla u|^{2}d\mu-\lambda_{1}(t)\int_{M_{t}}fH|u|^{2}d\mu.
\end{eqnarray}
\end{proposition}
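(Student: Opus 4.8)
The plan is to differentiate the two quantities $\lambda_{1,p}(u,t)=\int_{M_t}|\nabla u|^p\,d\mu$ and $\lambda_1(t)=\int_{M_t}|\nabla u|^2\,d\mu$ directly, using only two ingredients from the geometry of the flow \eqref{gf}: the evolution of the induced metric, namely $\frac{\partial}{\partial t}g_{ij}=2fh_{ij}$ (and hence $\frac{\partial}{\partial t}g^{ij}=-2fh^{ij}$), and the evolution of the volume element, $\frac{\partial}{\partial t}d\mu=fH\,d\mu$. I would first record these, then write the integrand in coordinates, $|\nabla u|^p=(g^{ij}u_iu_j)^{p/2}$, and split $\frac{d}{dt}\lambda_{1,p}(u,t)$ into the contribution from differentiating $d\mu$, which immediately gives $\int_{M_t}fH|\nabla u|^p\,d\mu$, and the contribution from differentiating $(g^{ij}u_iu_j)^{p/2}$.

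Second, I would expand $\frac{\partial}{\partial t}\big[(g^{ij}u_iu_j)^{p/2}\big]=\frac{p}{2}|\nabla u|^{p-2}\big(\dot g^{ij}u_iu_j+2g^{ij}\dot u_i u_j\big)$. The first term produces $-p\int_{M_t}f|\nabla u|^{p-2}h^{ij}u_iu_j\,d\mu$, which is exactly the first term on the right side of \eqref{evp}. The remaining term, $p\int_{M_t}|\nabla u|^{p-2}g^{ij}(\partial_t u)_i u_j\,d\mu=p\int_{M_t}|\nabla u|^{p-2}\langle\nabla\partial_t u,\nabla u\rangle\,d\mu$, must be handled by integration by parts: writing $|\nabla u|^{p-2}\nabla u$ and using $\mathrm{div}(|\nabla u|^{p-2}\nabla u)=\Delta_p u=-\lambda_{1,p}(t)|u|^{p-2}u$ (at a time where $u$ is the eigenfunction, using the Codazzi identity exactly as in \cite{Mao,zhao12} to justify the divergence structure), this term becomes $-p\int_{M_t}\Delta_p u\cdot\partial_t u\,d\mu=p\lambda_{1,p}(t)\int_{M_t}|u|^{p-2}u\,\partial_t u\,d\mu$.

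Third, I would close the argument by differentiating the normalization $\int_{M_t}|u|^p\,d\mu=1$: this gives $0=p\int_{M_t}|u|^{p-2}u\,\partial_t u\,d\mu+\int_{M_t}|u|^pfH\,d\mu$, so that $p\int_{M_t}|u|^{p-2}u\,\partial_t u\,d\mu=-\int_{M_t}fH|u|^p\,d\mu$. Substituting this back turns the leftover term into $-\lambda_{1,p}(t)\int_{M_t}fH|u|^p\,d\mu$, which is the last term in \eqref{evp}, and collecting the three pieces gives \eqref{evp}. The case $p=2$ in \eqref{ev2} is the verbatim specialization, with $|\nabla u|^{p-2}=1$ and $|u|^{p-2}u=u$, and $\lambda_{1,p}(u,t)=\lambda_1(t)$ since the eigenfunction realizes the Rayleigh quotient.

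The main obstacle is the integration-by-parts step: one must be careful that at the chosen time $t$ the function $u(\cdot,t)$ is genuinely the eigenfunction, so that $\Delta_p u=-\lambda_{1,p}(t)|u|^{p-2}u$ holds pointwise, and that the $p$-Laplacian's divergence form interacts correctly with the moving metric — this is precisely where the Codazzi property of $h_{ij}$ enters, ensuring no extra curvature terms appear beyond the $h^{ij}u_iu_j$ term already extracted from $\dot g^{ij}$. A minor additional point is regularity: for $p\neq 2$ the eigenfunction need not be smooth, so one should either work with the smooth function $\lambda_{1,p}(u,t)$ for an arbitrary admissible smooth $u$ (which is what \eqref{def2.7} is designed for) and only invoke the eigenvalue equation when legitimate, or cite the regularity results used in \cite{Mao}. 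Apart from these care points, the computation is routine.
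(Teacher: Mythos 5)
Your proof is correct, and it reaches \eqref{evp} by a genuinely different and in fact shorter route than the paper. The paper differentiates the form $\lambda_{1,p}(u,t)=-\int_{M_t}\Delta_p u\cdot u\,d\mu$, which forces it to compute $\frac{\partial}{\partial t}(\Delta u)$, hence the evolution of the Christoffel symbols $\frac{\partial \Gamma^k_{ij}}{\partial t}$, and to invoke the Codazzi property of $h_{ij}$ to convert $g^{ij}\nabla_l h_{ij}$ into $\nabla_l H$; this is followed by several integrations by parts before the terms recombine. You instead differentiate the equivalent Dirichlet form $\int_{M_t}|\nabla u|^p\,d\mu$, where only $\partial_t g^{ij}=-2fh^{ij}$ and $\partial_t\,d\mu=fH\,d\mu$ enter (no Christoffel symbols appear, since $u_i=\partial_i u$ for a scalar), and a single integration by parts plus the eigenvalue equation and the differentiated normalization $\int_{M_t}|u|^p\,d\mu=1$ produce the remaining two terms. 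One small remark: in your route the Codazzi identity is actually never needed --- the step $\int|\nabla u|^{p-2}\langle\nabla u_t,\nabla u\rangle\,d\mu=-\int u_t\,\Delta_p u\,d\mu$ is just the divergence theorem on a closed manifold --- so your hedge about ``justifying the divergence structure via Codazzi'' is an artifact of the paper's approach, not a requirement of yours. The only genuine care point, which you correctly flag and which the paper shares, is that for $p\neq 2$ one must assume the eigenfunction is smooth enough in $x$ and $t$ for the pointwise identity $\Delta_p u=-\lambda_{1,p}(t)|u|^{p-2}u$ and the $t$-differentiation to be legitimate; the proposition's hypotheses are framed precisely to grant this.
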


To prove the above evolutions, first we need the following lemma in \cite{guan flow}.

\begin{lemma}~\cite{guan flow}
Let $M^n_t$ be a smooth family of closed hypersurfaces in a space form
$\mathbb{N}^{n+1}(K)$ evolving under the flow \eqref{gf}, then\vskip.1in

 $(i)$ $\frac{\partial g_{ij}}{\partial t}=2fh_{ij}$;\vskip.1in

 $(ii)$  $ \frac{\partial h_{ij}}{\partial t}=-\nabla_i\nabla_j f + f (h^2)_{ij}-Kfg_{ij}$;\vskip.1in

 $(iii)$ $\frac{\partial H}{\partial t} = -\Delta f - f |h|^2-nKf$;\vskip.1in

 $(iv)$ $\frac{\partial d\mu_g}{\partial t}=fHd\mu_g$;\vskip.1in
\noindent
where $(h^2)_{ij}=h_{ik}h_{jl}g^{jk}$.
\end{lemma}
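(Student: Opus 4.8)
The plan is to differentiate the defining integral $\lambda_{1,p}(u,t)=\int_{M^n_t}|\nabla u|^p\,d\mu$ directly, treating $u=u(x,t)$ as the normalized eigenfunction and using parts (i) and (iv) of the Lemma to handle the time-dependence of the geometry, together with the eigenvalue equation $\Delta_p u+\lambda_{1,p}(t)|u|^{p-2}u=0$ to eliminate the awkward terms involving $\partial_t u$. Writing $|\nabla u|^p=(g^{ij}u_iu_j)^{p/2}$, I would apply the product rule to split $\frac{d}{dt}\lambda_{1,p}(u,t)$ into three contributions: the variation of the volume element $d\mu$, the variation of the inverse metric $g^{ij}$, and the variation of $u$ itself.

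The volume-element term is immediate from (iv): it contributes $\int_{M_t}fH|\nabla u|^p\,d\mu$, which is exactly the second term on the right-hand side of \eqref{evp}. For the metric term, I would use (i) in the form $\frac{\partial}{\partial t}g^{ij}=-g^{ik}g^{jl}\partial_t g_{kl}=-2fh^{ij}$; differentiating $(g^{ij}u_iu_j)^{p/2}$ with respect to this and collecting the factor $\frac{p}{2}|\nabla u|^{p-2}$ produces $-p\int_{M_t}f|\nabla u|^{p-2}u_iu_jh^{ij}\,d\mu$, the first term of \eqref{evp}. The remaining piece is the $u$-variation term, $p\int_{M_t}|\nabla u|^{p-2}g^{ij}u_i(\partial_t u)_j\,d\mu$, which after recognizing the integrand as $\langle|\nabla u|^{p-2}\nabla u,\nabla(\partial_t u)\rangle$ I would integrate by parts (using that $M_t$ is closed, so no boundary term appears) to rewrite it as $-\int_{M_t}\Delta_p u\cdot\partial_t u\,d\mu$. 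Substituting the eigenvalue equation $\Delta_p u=-\lambda_{1,p}(t)|u|^{p-2}u$ turns this into $\lambda_{1,p}(t)\int_{M_t}|u|^{p-2}u\,\partial_t u\,d\mu$.

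To finish I would show this last quantity equals $-\lambda_{1,p}(t)\int_{M_t}fH|u|^p\,d\mu$, the third term of \eqref{evp}. This follows by differentiating the normalization constraint $\int_{M_t}|u|^p\,d\mu=1$ in time: the derivative is $p\int_{M_t}|u|^{p-2}u\,\partial_t u\,d\mu+\int_{M_t}|u|^pfH\,d\mu=0$ by part (iv), so $\int_{M_t}|u|^{p-2}u\,\partial_t u\,d\mu=-\tfrac{1}{p}\int_{M_t}fH|u|^p\,d\mu$. Feeding this back gives the desired third term. Assembling the three contributions yields \eqref{evp}, and specializing to $p=2$ (where $|\nabla u|^{p-2}=1$ and $|u|^{p-2}u=u$) gives \eqref{ev2}.

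The main obstacle I anticipate is the handling of the $\partial_t u$ term: this is precisely why the authors introduced the auxiliary function $\lambda_{1,p}(u,t)$ rather than differentiating $\lambda_{1,p}(t)$ directly. The argument requires that $u$ remain a genuine eigenfunction for each $t$ so that the eigenvalue equation can be invoked inside the integration by parts, and it requires enough regularity of $u$ in $t$ to justify differentiating under the integral sign and integrating by parts. For $p=2$ standard elliptic perturbation theory guarantees smooth dependence, but for general $p$ the differentiability of the eigenfunction in $t$ is the delicate point; I expect the computation to be carried out formally (as in \cite{cao,cao1,Mao,zhao12}), with the normalization trick in the previous paragraph being the device that sidesteps needing to know $\partial_t u$ explicitly.
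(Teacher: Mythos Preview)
Your proposal does not address the stated Lemma at all: that Lemma records the standard evolution equations for $g_{ij}$, $h_{ij}$, $H$, and $d\mu$ under a general normal variation, and the paper simply cites it from \cite{guan flow} without proof. What you have actually written is a proof of Proposition~2.1 (the evolution of $\lambda_{1,p}(u,t)$ and $\lambda_1(t)$, equations \eqref{evp} and \eqref{ev2}), using the Lemma as input.

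Viewed as a proof of Proposition~2.1, your argument is correct and takes a genuinely shorter route than the paper's. The paper differentiates the divergence form $\lambda_{1,p}(u,t)=-\int_{M_t} u\,\Delta_p u\,d\mu$, which forces it to compute $\partial_t(\Delta u)$ via the evolution of the Christoffel symbols and to invoke the Codazzi property of $h_{ij}$ in the space form; several rounds of integration by parts are then needed to collapse the resulting expression. You differentiate the energy form $\int_{M_t}|\nabla u|^p\,d\mu$ instead, so the only geometric inputs required are $\partial_t g^{ij}=-2fh^{ij}$ and $\partial_t d\mu=fH\,d\mu$; the Christoffel and Codazzi computations never enter, and a single integration by parts together with the eigenvalue equation and the normalization $\int_{M_t}|u|^p\,d\mu=1$ disposes of the $\partial_t u$ contribution. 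Both approaches ultimately rely on the same normalization trick to eliminate $\partial_t u$, but yours reaches the answer with noticeably less bookkeeping; the paper's longer route, on the other hand, makes explicit where the Codazzi structure of the second fundamental form is used.
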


\noindent
\emph{\textbf{Proof of Proposition 2.1}}

Let $\{x_1, x_2,\cdots, x_n\}$ be the local normal coordinates on $M_t$, and denote by $B=|\nabla u|^{p-2}, B_t=\frac{\partial B}{\partial t}, \nabla_iu=u_i, \nabla_i\nabla_ju=u_{ij}$ for convenience. Taking derivatives with respect to time $t$ on both sides of~\eqref{def2.7}, we have
\begin{equation}\label{2.9}
\begin{split}
\frac{\partial}{\partial t}\lambda_{1,p}(u,t)&=\frac{\partial}{\partial t}\int_{M_t}\triangle_{p}u(x,t)u(x,t)d\mu\\
&=\frac{\partial}{\partial t}\int_{M_t}\mathrm{div}(B\nabla u)ud\mu\\
&=\frac{\partial}{\partial t}\int_{M_t}(g^{ij}B_{i}u_{j}+B\triangle
u)ud\mu\\&=\int_{M_t}\left(\frac{\partial}{\partial
t}g^{ij}B_{i}u_{j}+g^{ij}B_{ti}u_{j}+g^{ij}B_{i}u_{tj}+B_{t}\triangle
u+B\frac{\partial}{\partial t}(\triangle u)\right)ud\mu\\
&\qquad+\int_{M_t}g^{ij}(Bu_i)_j\left(\frac{\partial
u}{\partial{t}}d\mu+u\frac{\partial}{\partial t}(d\mu)\right).
\end{split}
\end{equation}

Meanwhile, calculating directly
\begin{equation}\label{2.10}
\begin{split}
\frac{\partial}{\partial t}(\triangle u)=\frac{\partial}{\partial
t}(g^{ij}\nabla_{i}\nabla_{j}u)&=g^{ij}\frac{\partial}{\partial
t}(\nabla_{i}\nabla_{j}u)+\frac{\partial
g^{ij}}{\partial t}\nabla_{i}\nabla_{j}u
\\&=g^{ij}\left(\frac{\partial^{2}u_t}{\partial x_{i}\partial
x_{j}}-\frac{\partial\Gamma^{k}_{ij}}{\partial t}\frac{\partial
u}{\partial x_k}-\Gamma^{k}_{ij}\frac{\partial u_t}{\partial
x_k}\right)+\frac{\partial g^{ij}}{\partial
t}\nabla_{i}\nabla_{j}u\\&=g^{ij}\left(\nabla_{i}\nabla_{j}u_t-\frac{\partial\Gamma^{k}_{ij}}{\partial
t}\frac{\partial u}{\partial
x_k}\right)+\frac{\partial g^{ij}}{\partial t}\nabla_{i}\nabla_{j}u.
\end{split}
\end{equation}

From Lemma 2.2 (i), we immediately have
$$\frac{\partial g^{ij}}{\partial t}=-2fh^{ij},$$
which yields
\begin{equation*}
\begin{split}
\frac{\partial\Gamma^{k}_{ij}}{\partial
t}&=\frac{1}{2}g^{kl}\left(\frac{\partial}{x_i}(\frac{\partial
g_{lj}}{\partial t})+\frac{\partial}{x_j}(\frac{\partial
g_{il}}{\partial t})-\frac{\partial}{x_l}(\frac{\partial
g_{ij}}{\partial
t})\right)\\&=\frac{1}{2}g^{kl}\left(\frac{\partial}{x_i}(2fh_{jl})+\frac{\partial}{x_j}(2fh_{il})-\frac{\partial}{x_l}(2fh_{ij})\right).
\end{split}
\end{equation*}
Observing that the second fundamental form $h_{ij}$ is Codazzi in space form, then
\begin{equation*}
\begin{split}
g^{ij}\frac{\partial\Gamma^{k}_{ij}}{\partial
t}&=g^{ij}g^{kl}\left(2f_{i}h_{jl}+fh_{ij,l}-f_{l}h_{ij}\right)\\&=2f_{i}h_{lj}g^{ij}g^{kl}+fH_{l}g^{kl}-f_{l}g^{kl}H,
\end{split}
\end{equation*}
therefore, \eqref{2.10} becomes
\begin{equation}\label{2.11}
\begin{split}
\frac{\partial}{\partial t}(\triangle
u)=-2fh_{kl}u_{ij}g^{ij}g^{kl}+\triangle u_t-2u_{k}f_{i}h_{lj}g^{ij}g^{kl}+u_{k}f_{l}g^{kl}H-fH_{l}u_{k}g^{kl}.
\end{split}
\end{equation}

Putting \eqref{2.11} into \eqref{2.9} and rearranging we obtain
\begin{equation*}
\begin{split}
-\frac{\partial \lambda_{1,p}(u,t)}{\partial
t}&=-2\int_{M_t}fuh^{ij}(Bu_{j})_{i}d\mu+\int_{M_t}ug^{ij}(B_{t}u_{j})_{i}d\mu+\int_{M_t}ug^{ij}(Bu_{tj})_{i}d\mu\\
&\qquad-\int_{M_t}Bu\left(2f_{i}u_{j}h^{ij}+fH^{i}u_{i}-Hf^{i}u_{i}\right)d\mu+\int_{M_t}ug^{ij}(Bu_{i})_{j}\left(u_{t}d\mu+\frac{\partial}{\partial
t}(d\mu)\right).
\end{split}
\end{equation*}

Integrating by parts for the first three terms we arrive at
\begin{equation}\label{2.12}
\begin{split}
-\frac{\partial \lambda_{1,p}(u,t)}{\partial
t}&=2\int_{M_t}\left(Bfh^{ij}u_{i}u_{j}+Bfuu_{j}h^{ij}_{\
,i}\right)d\mu-\int_{M_t}B_{t}|\nabla
u|^{2}d\mu-\int_{M_t}Bg^{ij}u_{i}u_{tj}d\mu\\
&\quad-\int_{M_t}\left(BufH^{k}u_{k}-BuHf^{k}u_{k}\right)d\mu+\int_{M_t}g^{ij}(Bu_{i})_{j}\left(u_{t}d\mu+u\frac{\partial}{\partial
t}(d\mu)\right).
\end{split}
\end{equation}

Observing that
\begin{equation*}
\begin{split}
B_{t}=\frac{\partial B}{\partial t}&=\frac{\partial}{\partial
t}(g^{ij}u_{i}u_{j})^{\frac{p-2}{2}}\\&=\frac{p-2}{2}|\nabla
u|^{p-4}\left(\frac{\partial g^{ij}}{\partial
t}u_{i}u_{j}+2g^{ij}u_{it}u_{j}\right)\\&=(p-2)|\nabla
u|^{p-4}\left(-fh^{ij}u_{i}u_{j}+g^{ij}u_{it}u_{j}\right),
\end{split}
\end{equation*}
therefore
\begin{eqnarray}\label{2.13}
-\int_{M_t}B_{t}|\nabla
u|^{2}d\mu=(p-2)\int_{M_t}\left(Bfh^{ij}u_{i}u_{j}-Bg^{ij}u_{it}u_{j}\right)d\mu.
\end{eqnarray}

By substituting \eqref{2.13} into \eqref{2.12}, it follows that
\begin{equation}\label{2.14}
\begin{split}
-\frac{\partial \lambda_{1,p}(u,t)}{\partial
t}&=p\int_{M_t}Bfh^{ij}u_{i}u_{j}d\mu+2\int_{M_t}Bfuh^{ij}_{\ ,i}u_{j}d\mu-(p-1)\int_{M_t}Bg^{ij}u_{i}u_{tj}d\mu\\
&\quad-\int_{M_t}\left(BufH^{k}u_{k}-BuHf^{k}u_{k}\right)d\mu+\int_{M_t}g^{ij}(Bu_{i})_{j}\left(u_{t}d\mu+u\frac{\partial}{\partial
t}(d\mu)\right)\\&=p\int_{M_t}Bfh^{ij}u_{i}u_{j}d\mu+2\int_{M_t}Bfuh^{ij}_{\ ,i}u_{j}d\mu-\int_{M_t}\left(BufH^{k}u_{k}-BuHf^{k}u_{k}\right)d\mu\\
&\quad+\int_{M_t}g^{ij}(Bu_{i})_{j}\left(pu_{t}d\mu+u\frac{\partial}{\partial
t}(d\mu)\right),
\end{split}
\end{equation}
where we have used
$$\int_{M_t}Bg^{ij}u_{i}u_{tj}d\mu=-\int_{M_t}g^{ij}(Bu_{i})_{j}u_{t}d\mu.$$

Now we treat the last term in \eqref{2.14}. Since $\int_{M_t}|u|^{p}d\mu=1$ by assumption, we have by taking derivatives for both sides
\begin{equation*}
\begin{split}
\frac{\partial }{\partial
t}\int_{M_t}|u|^{p}d\mu=0&=\int_{M_t}\frac{\partial}{\partial t}(|u|^2)^{\frac{p-1}{2}}u_{t}d\mu+\int_{M_t}|u|^{p}(d\mu)_{t}\\
&=\int_{M_t}|u|^{p-2}u\left(pu_{t}d\mu+u\frac{\partial}{\partial
t}(d\mu)\right).
\end{split}
\end{equation*}

On the other hand, if $u=u(x,t)$ is the eigenfunction of $\lambda_{1,p}(t)$ on $M_{t}$ at time $t$, we have
$$
|u|^{p-2}u=-\frac{1}{\lambda_{1,p}(t)}\triangle_{p}u=-\frac{1}{\lambda_{1,p}(t)}g^{ij}\nabla_{i}(B\nabla_{j}u).
$$
As a result
\begin{equation*}
\begin{split}
\int_{M_t}|u|^{p-2}u\left(pu_{t}d\mu+u\frac{\partial}{\partial
t}(d\mu)\right)&=-\frac{1}{\lambda_{1,p}(t)}\int_{M_t}g^{ij}(Bu_{i})_{j}\left(pu_{t}d\mu+u\frac{\partial}{\partial
t}(d\mu)\right)\\&=0,
\end{split}
\end{equation*}
which says that, the last term of \eqref{2.14} disappears.

Thus \eqref{2.14}  becomes
\begin{equation*}
\begin{split}
\frac{\partial \lambda_{1,p}(u,t)}{\partial t}
&=-p\int_{M_t}Bfh^{ij}u_{i}u_{j}d\mu-2\int_{M_t}Bfuh^{ij}_{\ ,i}u_{j}d\mu+\int_{M_t}\left(BufH^{k}u_{k}-BuHf^{k}u_{k}\right)d\mu\\
&=-p\int_{M_t}Bfh^{ij}u_{i}u_{j}d\mu-\int_{M_t}BfuH^{k}u_{k}d\mu-\int_{M_t}BHuf^{k}u_{k}d\mu.
\end{split}
\end{equation*}

Integrating by parts for the last term, we have
\begin{equation*}
\begin{split}
\int_{M_t}BHuf^{k}u_{k}d\mu&=-\int_{M_t}B^{k}Hufu_{k}d\mu-\int_{M_t}BHu^{k}fu_{k}d\mu
-\int_{M_t}BH^{k}ufu_{k}d\mu-\int_{M_t}BHuf\triangle ud\mu\\
&=-\int_{M_t}Hufg^{ij}(Bu_{j})_{i}d\mu-\int_{M_t}BHf|\nabla
u|^{2}d\mu-\int_{M_t}BufH^{k}u_{k}d\mu.
\end{split}
\end{equation*}

Combining the above two equations, we finally have
\begin{equation*}
\begin{split}
\frac{\partial \lambda_{1,p}(u,t)}{\partial t}
&=-p\int_{M_t}Bfh^{ij}u_{i}u_{j}d\mu+\int_{M_t}fHu\triangle_{p}ud\mu+\int_{M_t}fHB|\nabla u|^{2}d\mu\\
&=-p\int_{M_t}Bfh^{ij}u_{i}u_{j}d\mu-\lambda_{1,p}(t)\int_{M_t}fH|u|^{p}d\mu+\int_{M_t}fH|\nabla
u|^{p}d\mu.
\end{split}
\end{equation*}
This is \eqref{evp} of proposition 2.1, and when $p=2$, \eqref{ev2} can be obtained similarly. $\hfill \square$

\section{Properties of inverse mean curvature flow}

Let $M^n$ be a smooth closed $n$-dimensional hypersurface in an Euclidean space $\mathbb{R}^{n+1}$, given by a smooth embedding $X_0:\mathbb{S}^n\rightarrow \mathbb{R}^{n+1}$. The inverse mean curvature flow is the initial value problem
\begin{eqnarray}\label{flow1}
\frac{\partial X}{\partial t}=\frac{1}{H}\nu,  ~~X(\cdot,0)=X_0(\cdot).
\end{eqnarray}
If $(M^{n}_t, g_{ij}(t))$ is the solution of flow $\eqref{flow1}$, the evolution equations of geometric quantities on $M^n_t$ can be easily obtained from Lemma 2.2, or refer \cite{Huisken08} directly.

\begin{lemma}\label{EQ1}
Under the inverse mean curvature flow $\eqref{flow1}$ , we have
\vskip.1in

$(i)$ $\frac{\partial g_{ij}}{\partial
t}=\frac{2}{H}h_{ij},~~\frac{\partial g^{ij}}{\partial
t}=-\frac{2}{H}h^{ij}$; \vskip.1in

 $(ii)$  $\frac{\partial h_{ij}}{\partial t}=\frac{1}{H^2}\nabla_i\nabla_j H-\frac{2}{H^3}\nabla_i H\nabla_j H+\frac{1}{H}(h^2)_{ij}$;\vskip.1in

 $(iii)$ $\frac{\partial H}{\partial t}=\frac{\triangle
        H}{H^2}-\frac{2}{H^3}|\nabla H|^2-\frac{|A|^2}{H}$; \vskip.1in

 $(iv)$ $\frac{\partial}{\partial t}(\frac{1}{H})=\frac{1}{H^2}\triangle(\frac{1}{H^2})+\frac{|A|^2}{H^2}\frac{1}{H}$; \vskip.1in

 $(v)$ $\frac{\partial\nu}{\partial t}=-\frac{\nabla H}{H^2}$.
\end{lemma}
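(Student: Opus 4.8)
The plan is to read off parts $(i)$--$(iv)$ from the general evolution equations of Lemma 2.2, specialized to the inverse mean curvature flow: here the ambient space is $\mathbb{R}^{n+1}$, so the sectional curvature is $K=0$, and the normal speed is $f=\frac{1}{H}$. Part $(v)$, the evolution of the unit normal, is not contained in Lemma 2.2, so I would derive it separately from the orthogonality relations defining $\nu$.

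For $(i)$, Lemma 2.2$(i)$ gives directly $\frac{\partial g_{ij}}{\partial t}=2fh_{ij}=\frac{2}{H}h_{ij}$. To get the evolution of the inverse metric I would differentiate the identity $g^{ik}g_{kj}=\delta^i_j$ in $t$ and solve for $\frac{\partial g^{ij}}{\partial t}$, obtaining $\frac{\partial g^{ij}}{\partial t}=-g^{ik}g^{jl}\frac{\partial g_{kl}}{\partial t}=-\frac{2}{H}h^{ij}$.

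For $(ii)$ and $(iii)$ the only task is to substitute $f=\frac{1}{H}$ (with $K=0$) into Lemma 2.2$(ii)$--$(iii)$ and to compute the covariant derivatives of $\frac{1}{H}$ by the chain rule: $\nabla_j\frac{1}{H}=-\frac{1}{H^2}\nabla_j H$, then $\nabla_i\nabla_j\frac{1}{H}=\frac{2}{H^3}\nabla_iH\nabla_jH-\frac{1}{H^2}\nabla_i\nabla_jH$, and tracing, $\triangle\frac{1}{H}=\frac{2}{H^3}|\nabla H|^2-\frac{1}{H^2}\triangle H$. Inserting $-\nabla_i\nabla_j f$ into Lemma 2.2$(ii)$ yields $\frac{1}{H^2}\nabla_i\nabla_jH-\frac{2}{H^3}\nabla_iH\nabla_jH+\frac{1}{H}(h^2)_{ij}$, which is $(ii)$; inserting $-\triangle f$ into Lemma 2.2$(iii)$ and using $|h|^2=|A|^2$ yields $\frac{\triangle H}{H^2}-\frac{2}{H^3}|\nabla H|^2-\frac{|A|^2}{H}$, which is $(iii)$. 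Statement $(iv)$ then follows by the chain rule $\frac{\partial}{\partial t}\frac{1}{H}=-\frac{1}{H^2}\frac{\partial H}{\partial t}$ applied to $(iii)$: the first two terms regroup, via $-\frac{1}{H^2}\big(\frac{\triangle H}{H^2}-\frac{2}{H^3}|\nabla H|^2\big)=\frac{1}{H^2}\triangle\frac{1}{H}$, into the advertised divergence form, while the remaining term becomes $\frac{|A|^2}{H^2}\cdot\frac{1}{H}$.

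The genuinely separate step is $(v)$, where I would argue from scratch. Differentiating $\langle\nu,\nu\rangle=1$ in $t$ shows that $\frac{\partial\nu}{\partial t}$ is tangential; then differentiating $\langle\nu,\frac{\partial X}{\partial x_i}\rangle=0$ and using $\frac{\partial}{\partial t}\frac{\partial X}{\partial x_i}=\frac{\partial}{\partial x_i}(f\nu)=(\nabla_i f)\nu+f\frac{\partial\nu}{\partial x_i}$ together with $\langle\nu,\frac{\partial\nu}{\partial x_i}\rangle=0$ gives $\langle\frac{\partial\nu}{\partial t},\frac{\partial X}{\partial x_i}\rangle=-\nabla_i f$, hence $\frac{\partial\nu}{\partial t}=-\nabla f$; substituting $f=\frac{1}{H}$ together with $\nabla\frac{1}{H}=-\frac{1}{H^2}\nabla H$ produces the expression for $\frac{\partial\nu}{\partial t}$ in $(v)$. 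I expect no serious obstacle: every step is either a direct substitution into Lemma 2.2 or a one-line orthogonality computation. The only points demanding care are the chain-rule bookkeeping for the derivatives of $1/H$ (it is $\triangle\frac{1}{H}$ that appears in the clean divergence form of $(iv)$) and the consistent orientation conventions for $\nu$ and the sign of $H$ fixed in Section 2, which pin down the overall sign in $(v)$.
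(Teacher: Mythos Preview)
Your approach is correct and matches the paper exactly: the lemma is stated there without proof, only with the remark that it ``can be easily obtained from Lemma~2.2, or refer \cite{Huisken08} directly,'' and you carry out precisely that substitution $f=1/H$, $K=0$, together with the standard orthogonality argument for $(v)$. Your observation that $(iv)$ should read $\frac{1}{H^2}\triangle\!\big(\frac{1}{H}\big)$ rather than the printed $\frac{1}{H^2}\triangle\!\big(\frac{1}{H^2}\big)$ is correct (a misprint in the statement); note also that your own computation for $(v)$ gives $\partial_t\nu=-\nabla f=+\frac{\nabla H}{H^2}$, so the sign printed in $(v)$ is likewise a typo.
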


For the rescaled hypersurface $\tilde{X}(t)=e^{-\frac{t}{n}}X(t)$, we denote the corresponding geometric quantities with a tilde. The following  relations can be obtained as in \cite{Huisken84}

\begin{alignat*}{3}
    \tilde{g}_{ij}&=g_{ij}e^{-\frac{2t}{n}}, &\qquad \tilde{g^{ij}}&=e^{\frac{2t}{n}}g^{ij}, \qquad &\tilde{h_{ij}}&=e^{-\frac{t}{n}}h_{ij},\\
    \tilde{H}&=e^{\frac{t}{n}}H,  &\qquad |\tilde A|^2&=e^{\frac{2t}{n}}|A|^2,  \qquad &d\tilde{\mu}&=e^{-t}d\mu.
\end{alignat*}

The rescaled hypersurface $\tilde{M}_t$ satisfies
\begin{equation}\label{flow2}
\frac{\partial \tilde{X}}{\partial
t}=-\frac{1}{n}\tilde{X}+\frac{1}{\tilde{H}}\nu.
\end{equation}

Thus, the following evolution equations for the rescaled flow \eqref{flow2} can be given by direct calculations.

\begin{lemma}\label{EQ2}
For the rescaled flow $\eqref{flow2}$, we have \vskip.1in
$(i)$ $\frac{\partial\tilde{g}_{ij}}{\partial
t}=2\left(\frac{\tilde h_{ij}}{\tilde{H}}-\frac{1}{n}\tilde g_{ij}\right)~~
\frac{\partial\tilde g^{ij}}{\partial
t}=2\left(\frac{1}{n}\tilde g^{ij}-\frac{\tilde h^{ij}}{\tilde
H}\right)$; \vskip.1in

 $(ii)$  $\frac{\partial}{\partial t}\tilde H=\frac{1}{\tilde H^2}\tilde \triangle\tilde H-\frac{1}{\tilde H^3}|\nabla\tilde H|^2+\frac{1}{\tilde H}\left(\frac{1}{n}\tilde H^2-|\tilde A|^2\right)$; \vskip.1in

$(iii)$ $\frac{\partial}{\partial t}\left(\frac{1}{\tilde H}\right)=\frac{1}{\tilde {H}^2}\tilde{\triangle}\left(\frac{1}{\tilde H}\right)
+\left(\frac{1}{n}+\frac{|\tilde
A|^2}{\tilde{H}^2}\right)\left(\frac{1}{\tilde H}\right)$; \vskip.1in

$(iv)$ $\frac{\partial\tilde{\Gamma}^k_{ij}}{\partial
t}\tilde g^{ij}=-\frac{2}{n}\tilde g^{ij}\tilde{\Gamma}^k_{ij}+
\frac{1}{\tilde{H}}\tilde g^{kl}\tilde H_l
-\frac{1}{\tilde H^2}\tilde g^{kl}\left(2\tilde h_{il}\tilde H_j\tilde g^{ij}-\tilde H\tilde H_l\right)$; \vskip.1in

 $(v)$ $\frac{\partial}{\partial t}d\tilde{\mu}=0$;\\
 where $d\tilde{\mu}$ is the volume element on $\tilde{M}^n_t$.
\end{lemma}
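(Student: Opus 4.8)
\emph{The plan is to} derive the five evolution equations of Lemma~\ref{EQ2} purely by combining the unrescaled evolution equations of Lemma~\ref{EQ1} with the scaling relations listed just before the lemma statement. Since the rescaled flow \eqref{flow2} differs from the unrescaled flow \eqref{flow1} only by the time-dependent homothety $\tilde X = e^{-t/n}X$, every rescaled geometric quantity is a fixed power of $e^{t/n}$ times its unrescaled counterpart, so each time derivative splits into a term coming from the exponential prefactor plus a term coming from $\partial_t$ of the underlying quantity, the latter supplied by Lemma~\ref{EQ1}.

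\emph{First I would} treat the metric, item $(i)$. Writing $\tilde g_{ij}=e^{-2t/n}g_{ij}$, I differentiate to get $\partial_t\tilde g_{ij}=-\tfrac{2}{n}e^{-2t/n}g_{ij}+e^{-2t/n}\partial_t g_{ij}$; substituting $\partial_t g_{ij}=\tfrac{2}{H}h_{ij}$ from Lemma~\ref{EQ1}$(i)$ and rewriting $e^{-2t/n}\tfrac{2}{H}h_{ij}$ as $\tfrac{2}{\tilde H}\tilde h_{ij}$ via $\tilde H=e^{t/n}H$, $\tilde h_{ij}=e^{-t/n}h_{ij}$ yields exactly $2\bigl(\tilde h_{ij}/\tilde H-\tfrac{1}{n}\tilde g_{ij}\bigr)$; the inverse-metric formula follows the same way from $\tilde g^{ij}=e^{2t/n}g^{ij}$ and Lemma~\ref{EQ1}$(i)$. \emph{Next I would} do the mean curvature, item $(ii)$: from $\tilde H=e^{t/n}H$ I get $\partial_t\tilde H=\tfrac{1}{n}\tilde H+e^{t/n}\partial_t H$, and I insert Lemma~\ref{EQ1}$(iii)$ for $\partial_t H$. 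Here the one routine point to watch is that the Laplacian rescales, $\tilde\triangle=e^{2t/n}\triangle$ (equivalently $\tilde\triangle\tilde H=e^{3t/n}\triangle H$ on $\tilde H$), and that $|\nabla\tilde H|^2=\tilde g^{ij}\tilde H_i\tilde H_j=e^{4t/n}|\nabla H|^2$ and $|\tilde A|^2=e^{2t/n}|A|^2$; collecting the exponents then converts each term of $e^{t/n}\partial_t H$ into the stated $\tilde{}$-form, and the leftover $\tfrac{1}{n}\tilde H$ combines with $-e^{t/n}|A|^2/H=-|\tilde A|^2/\tilde H$ to produce $\tfrac{1}{\tilde H}\bigl(\tfrac{1}{n}\tilde H^2-|\tilde A|^2\bigr)$. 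Item $(iii)$ for $1/\tilde H$ is obtained either by the analogous computation starting from $1/\tilde H=e^{-t/n}(1/H)$ and Lemma~\ref{EQ1}$(iv)$, or more cheaply by applying $\partial_t(1/\tilde H)=-\tilde H^{-2}\partial_t\tilde H$ to item $(ii)$ and simplifying.

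\emph{For item $(iv)$}, the evolution of $\tilde g^{ij}\tilde\Gamma^k_{ij}$, I would exploit the general Christoffel variation formula already used in Section~2, namely $\partial_t\tilde\Gamma^k_{ij}=\tfrac12\tilde g^{kl}\bigl(\tilde\nabla_i\partial_t\tilde g_{lj}+\tilde\nabla_j\partial_t\tilde g_{il}-\tilde\nabla_l\partial_t\tilde g_{ij}\bigr)$, feed in item $(i)$ for $\partial_t\tilde g_{ij}$, and then contract with $\tilde g^{ij}$; the Codazzi identity (valid in a space form, here $\mathbb R^{n+1}$) collapses the three covariant derivatives of $\tilde h$ into derivatives of $\tilde H$, reproducing the term $\tfrac{1}{\tilde H}\tilde g^{kl}\tilde H_l$ together with the $\tilde h_{il}\tilde H_j$ and $\tilde H\tilde H_l$ corrections, while the $-\tfrac1n\tilde g_{ij}$ piece of item $(i)$ gives the $-\tfrac{2}{n}\tilde g^{ij}\tilde\Gamma^k_{ij}$ term since the covariant derivative of $\tilde g_{ij}$ vanishes and only the non-tensorial first derivative of the metric survives under the bare $\partial_{x}$ in $\tilde\Gamma$. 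Finally, item $(v)$ is immediate from the volume relation $d\tilde\mu=e^{-t}d\mu$: differentiating gives $\partial_t d\tilde\mu=-e^{-t}d\mu+e^{-t}\partial_t d\mu$, and Lemma~\ref{EQ1}$(iv)$ in the inverse-mean-curvature case reads $\partial_t d\mu=fH\,d\mu=\tfrac1H\cdot H\,d\mu=d\mu$, so the two terms cancel and $\partial_t d\tilde\mu=0$.

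\emph{The main obstacle} I anticipate is item $(iv)$: it is the only statement whose right-hand side is not a verbatim rescaling of a Lemma~\ref{EQ1} formula, because the Christoffel symbols are not tensors and the homothety interacts with their inhomogeneous transformation, so I must track carefully which derivatives of the metric perturbation are covariant and which are coordinate derivatives, and apply Codazzi correctly to match the precise combination $2\tilde h_{il}\tilde H_j\tilde g^{ij}-\tilde H\tilde H_l$ appearing in the claim. The remaining items are bookkeeping of exponential factors, with the only care needed being the consistent rescaling of $\triangle$, $|\nabla H|^2$, and $|A|^2$ noted above.
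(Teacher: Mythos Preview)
Your proposal is correct and follows exactly the approach the paper intends: the paper offers no detailed argument for Lemma~\ref{EQ2} beyond the sentence ``the following evolution equations for the rescaled flow \eqref{flow2} can be given by direct calculations,'' and your outline supplies precisely those calculations, combining the scaling relations $\tilde g_{ij}=e^{-2t/n}g_{ij}$, $\tilde h_{ij}=e^{-t/n}h_{ij}$, $\tilde H=e^{t/n}H$, $d\tilde\mu=e^{-t}d\mu$ with the unrescaled evolutions of Lemma~\ref{EQ1}. Your caution about item~$(iv)$ is well placed---it is indeed the only part requiring more than exponent bookkeeping, and your plan to use the coordinate-derivative form of the Christoffel variation (as in the proof of Proposition~\ref{th1.3}) together with Codazzi is the right one; just note that the $-\tfrac{2}{n}\tilde g^{ij}\tilde\Gamma^k_{ij}$ term arises because the paper works with partial (not covariant) derivatives in that formula, so the constant multiple $-\tfrac{2}{n}\tilde g_{ij}$ of the metric contributes $-\tfrac{2}{n}\tilde\Gamma^k_{ij}$ rather than zero.
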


The long-time existence and convergence about flow \eqref{flow1} has been given by Gerhardt in \cite{Gerhardt Rn}, which is the foundation of our conclusion in this paper.

\begin{theorem}\cite{Gerhardt Rn}\label{th1.2}
Let $M^n$ be a compact, star-shaped $C^{2,\alpha}$ hypersurface in $\mathbb{R}^{n+1}$, given by an embedding
$$X_0:\mathbb{S}^n\rightarrow \mathbb{R}^{n+1},$$
then the inverse mean curvature flow \eqref{flow1} defined on $\mathbb{S}^n\times \mathbb{R}_+$ with $X_0(\mathbb{S}^n)=M^n$ has a unique solution of class $C^{2,\alpha}$, where $\nu$ is the outward unit normal of hypersurface $M_t=X(\mathbb{S}^n,t)$. And the rescaled hypersurface
\begin{eqnarray*}
\tilde{X}=e^{-\frac{t}{n}}X
\end{eqnarray*}
converge exponentially fast to a uniquely determined sphere with radius
\begin{eqnarray*}
R=\left(\frac{|M|}{|\mathbb{S}^n|}\right)^{\frac{1}{n}}.
\end{eqnarray*}
\end{theorem}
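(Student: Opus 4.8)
The plan is to reduce the geometric flow \eqref{flow1} to a scalar quasilinear parabolic equation over $\mathbb{S}^n$, establish time-independent a priori estimates to obtain long-time existence, and then exploit the rescaled evolution equations of Lemma~\ref{EQ2} to force exponential convergence to a round sphere. Since $M^n$ is star-shaped with respect to the origin, each $M_t$ is a radial graph, and I would use the logarithmic radial function $\varphi(x,t)$ on $\mathbb{S}^n\times[0,T)$ so that a point of $M_t$ is $e^{\varphi(x,t)}x$. Substituting this ansatz into $\partial_t X=H^{-1}\nu$ converts \eqref{flow1} into a single equation $\partial_t\varphi=\mathcal{F}(\varphi,D\varphi,D^2\varphi)$, where $\mathcal{F}$ is built from the mean-curvature operator expressed through $\varphi$ and is uniformly parabolic as long as $H>0$ and the graph stays star-shaped. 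Standard quasilinear parabolic theory then gives short-time existence and uniqueness of a $C^{2,\alpha}$ solution.

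The core of the argument is a family of a priori estimates uniform in $t$. First, (a) a $C^0$ bound on $\varphi$, obtained by comparing $M_t$ with concentric spheres, whose IMCF is explicit (a sphere of radius $r$ evolves by $\dot r=r/n$), so the maximum principle traps $\min_x\varphi$ and $\max_x\varphi$ between sphere solutions and keeps $M_t$ star-shaped. Next, (b) a gradient estimate showing $|D\varphi|$ stays bounded, which guarantees $M_t$ remains a graph with a uniform interior-cone condition and, crucially, keeps $H$ bounded away from $0$ so that the speed $1/H$ cannot blow up. Then (c) a $C^2$ estimate bounding the second fundamental form $h_{ij}$, for which I would apply the maximum principle to the evolution equation of $H$ in Lemma~\ref{EQ1}(iii), using the gradient bound to control the principal curvatures from above and below. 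Finally, (d) higher regularity by Krylov--Safonov and Schauder estimates upgrades these to a uniform $C^{2,\alpha}$ bound and allows bootstrapping. With time-independent estimates the maximal interval is $\mathbb{R}_+$, giving the claimed global solution.

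For convergence I would pass to the rescaled flow \eqref{flow2}. By Lemma~\ref{EQ2}(v) the rescaled area element is constant, so $\mathrm{vol}(\tilde M_t)=\mathrm{vol}(M)$ for all $t$; hence if $\tilde M_t$ converges to a sphere of radius $R$, its area $R^n|\mathbb{S}^n|$ equals $|M|$, which forces $R=(|M|/|\mathbb{S}^n|)^{1/n}$ and pins down the limit. To prove convergence to a sphere I would track a scale-invariant quantity that vanishes exactly on spheres, namely the oscillation of the rescaled radial function (or equivalently the traceless second fundamental form), and show it decays exponentially: using Lemma~\ref{EQ2}(ii)--(iii) and the maximum principle on the rescaled scalar equation, the oscillation satisfies $\frac{d}{dt}\mathrm{osc}(\tilde\varphi)\le -c\,\mathrm{osc}(\tilde\varphi)$ for some $c>0$, yielding exponential decay; interpolation with the uniform higher-order bounds then promotes this to exponential convergence of $\tilde M_t$ to the round sphere of radius $R$.

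I expect the main obstacle to be the interlocking pair of estimates (b) and (c): the gradient estimate must simultaneously preserve star-shapedness and keep the mean curvature strictly positive, while the curvature bound relies on the gradient bound to close the maximum-principle argument for $H$. Obtaining these bounds uniformly in $t$, rather than merely on finite intervals, is what makes long-time existence nontrivial, and it is precisely where star-shapedness of the initial data is genuinely used.
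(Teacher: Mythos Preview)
The paper does not prove this theorem; it is quoted verbatim from Gerhardt \cite{Gerhardt Rn} (and implicitly Urbas \cite{Urbas}) and used as a black box underpinning the rest of the argument. There is therefore no proof in the paper to compare your proposal against---the authors simply cite the result and move on, with Remark~3.4 noting that Gerhardt in fact proved a more general statement.

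That said, your outline is a faithful sketch of the strategy actually carried out in \cite{Gerhardt Rn,Urbas}: parametrise the star-shaped hypersurface as a radial graph over $\mathbb{S}^n$, reduce \eqref{flow1} to a scalar quasilinear parabolic equation for the (log-)radial function, derive uniform $C^0$, $C^1$, $C^2$ estimates by maximum-principle arguments, upgrade via Krylov--Safonov and Schauder, and then show exponential decay of the oscillation of the rescaled radial function. One point worth sharpening: in your step (c), the evolution of $H$ from Lemma~\ref{EQ1}(iii) by itself does not control the full second fundamental form; in the actual proofs the $C^2$ estimate comes from the scalar equation for $\varphi$ (exploiting concavity of the operator) together with Krylov--Safonov, not from the geometric evolution of $H$ alone. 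Your identification of the interlocking $C^1$/$C^2$ estimates and the preservation of $H>0$ as the delicate part is accurate.
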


\begin{remark}
In fact, Gerhardt proved a more general result for the flow $\frac{\partial X}{\partial t}=\frac{1}{f(\sigma(\kappa_i))}\nu$, where $f(\sigma(\kappa_i))$ is a symmetric positive function homogeneous of degree one evaluated on the principle curvature $\kappa_i$ of $M_t$.
\end{remark}

\begin{lemma}\cite{Urbas}
Let $(M^{n}_t, g_{ij}(t))$ be the solution of inverse mean curvature flow \eqref{flow1}, then $$C_1e^{-t}\leq H(t)\leq C_2e^{-t},$$
where $C_1$ and $C_2$ are positive constants depending only on $n$ , $\|X_0\|$ and its
derivatives up to second order.
\end{lemma}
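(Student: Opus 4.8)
The plan is to control $H$ by applying the parabolic maximum principle to its own evolution equation, Lemma~\ref{EQ1}$(iii)$,
\[
\frac{\partial H}{\partial t}=\frac{\triangle H}{H^2}-\frac{2}{H^3}|\nabla H|^2-\frac{|A|^2}{H},
\]
and to feed in the two elementary pinching inequalities $\tfrac1n H^2\le |A|^2$ (Cauchy--Schwarz, always valid since $H>0$) and $|A|^2\le H^2$ (valid once all principal curvatures are nonnegative). I would work with the Lipschitz functions $H_{\max}(t)=\max_{M_t}H$ and $H_{\min}(t)=\min_{M_t}H$; by Hamilton's trick these are differentiable for a.e.\ $t$, and at a point realizing the spatial extremum one has $\nabla H=0$, while $\triangle H\le 0$ at a maximum and $\triangle H\ge 0$ at a minimum. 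Hence the gradient term drops and the Laplacian term carries a favorable sign, leaving only the reaction term $-|A|^2/H$ to drive the estimate.

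For the \textbf{lower bound} I would evaluate at a spatial minimum: the gradient term vanishes and $\triangle H\ge 0$, so
\[
\frac{d}{dt}H_{\min}\ \ge\ -\frac{|A|^2}{H_{\min}}\ \ge\ -H_{\min},
\]
the last step using $|A|^2\le H^2$. That inequality is furnished by the standing hypothesis $h_{ij}\ge \tfrac{\alpha}{2}Hg_{ij}$ of Theorem~\ref{th1.1}, which forces every $\kappa_i\ge \tfrac{\alpha}{2}H\ge 0$ and is preserved under \eqref{flow1} (the key geometric inequality of the paper); then $|A|^2=\sum\kappa_i^2\le(\sum\kappa_i)^2=H^2$. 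Integrating yields $H_{\min}(t)\ge H_{\min}(0)\,e^{-t}$, so the lower bound holds with $C_1=\min_{M}H>0$, a constant determined by $X_0$ up to second derivatives.

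For the \textbf{upper bound} I would evaluate at a spatial maximum, where again the gradient term vanishes and $\triangle H\le 0$, giving
\[
\frac{d}{dt}H_{\max}\ \le\ -\frac{|A|^2}{H_{\max}}\ \le\ -\frac1n H_{\max}
\]
by $|A|^2\ge \tfrac1n H^2$. Integration gives $H_{\max}(t)\le H_{\max}(0)\,e^{-t/n}$, the rate consistent with the round limit, since on $\mathbb{S}^n(r(t))$ one has $r(t)=r(0)e^{t/n}$ and $H=n/r(t)\sim e^{-t/n}$. The sharp two-sided control $c_1 e^{-t/n}\le H\le c_2 e^{-t/n}$ then follows from Theorem~\ref{th1.2}: the rescaled mean curvature $\tilde H=e^{t/n}H$ converges exponentially to the constant $n/R$, hence is pinched between positive constants for large $t$ and, by smoothness on compact time intervals, for all $t\ge 0$; the constants depend only on the initial data through this convergence.

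I expect the main obstacle to be twofold. First, the exponent itself: the maximum principle naturally delivers the lower rate $e^{-t}$ and the upper rate $e^{-t/n}$, and the genuinely sharp common rate is $e^{-t/n}$ (the spherical rate), so the stated upper inequality should be read in that scaling rather than literally as $e^{-t}$. Second, and this is the real content behind Urbas' general statement, where the constants depend only on $\|X_0\|$ up to second order \emph{without} a convexity hypothesis, one must produce $t$-uniform $C^0$, $C^1$ and $C^2$ bounds for the star-shaped radial graph $\rho(\cdot,t)$ solving the scalar parabolic equation associated with \eqref{flow1}: the $C^0$ bound by comparing with inner and outer spheres evolving under \eqref{flow1} (an avoidance/comparison argument), the $C^1$ bound by a maximum-principle estimate on $|\nabla\rho|$, and the curvature bound by combining the evolution equation above with these lower-order estimates. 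Establishing that all of these hold with constants independent of $t$ is the crux; the clean extremum computations above are then what convert those estimates into the claimed exponential decay.
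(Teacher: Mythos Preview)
The paper does not supply a proof of this lemma: it is quoted from Urbas with the citation \cite{Urbas} and used as a black box. In fact the only consequence invoked later (in the proofs of Proposition~3.6 and Theorem~3.7) is the qualitative one that $H(t)>0$ along the flow, so that the factor $2\varepsilon/H$ in front of $(|A|^2-\varepsilon H^2)|Y|^2$ is positive. There is therefore no ``paper's own proof'' to compare your proposal against.

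On the substance of your attempt: your diagnosis of the exponent is correct. The spherical solution has $H(t)=\frac{n}{r(0)}e^{-t/n}$, so the upper inequality $H\le C_2 e^{-t}$ is literally false for $n\ge 2$; the lemma should read $e^{-t/n}$, and your maximum-principle computation $H_{\max}'\le -\tfrac1n H_{\max}$ recovers exactly that rate. Your lower-bound argument, however, imports the convexity hypothesis $h_{ij}\ge 0$ from Theorem~\ref{th1.1} in order to use $|A|^2\le H^2$; Lemma~3.5 as stated (and in Urbas) assumes only a star-shaped initial hypersurface with $H>0$, so that step is unavailable in the generality claimed. You acknowledge this in your final paragraph and correctly point to the radial-graph $C^0$--$C^2$ estimates of Gerhardt and Urbas as the genuine content needed to obtain constants depending only on $\|X_0\|_{C^2}$ without convexity. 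So your sketch is an honest outline that identifies both the typo and the real difficulty, but it does not itself close the gap to the cited general result.
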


By the Hamilton maximum principle for tensor, we can prove the following
\begin{proposition}
Let $(M^{n}_t, g_{ij}(t))$ be the solution of inverse mean curvature flow $\eqref{flow1}$, if $h_{ij}\geq\varepsilon Hg_{ij}$ at t=0 for some $0\leq\varepsilon\leq\frac{1}{n}$, then
$$h_{ij}\geq\varepsilon Hg_{ij}$$
preserves under this flow for all time with the same $\varepsilon$.
\end{proposition}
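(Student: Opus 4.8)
The plan is to apply Hamilton's tensor maximum principle to the evolving second fundamental form. I would set $M_{ij} = h_{ij} - \varepsilon H g_{ij}$ and show that the tensor inequality $M_{ij} \geq 0$ is preserved along the inverse mean curvature flow \eqref{flow1}. For this I first need the parabolic evolution equation for $h_{ij}$ (not merely the pointwise $\partial_t$ formula from Lemma~\ref{EQ1}(ii), but one with a genuine Laplacian on the right-hand side). Using the Simons-type identity $\Delta h_{ij} = \nabla_i\nabla_j H + H(h^2)_{ij} - |A|^2 h_{ij}$ valid in $\mathbb{R}^{n+1}$, one rewrites Lemma~\ref{EQ1}(ii) as
\begin{equation*}
\frac{\partial h_{ij}}{\partial t} = \frac{1}{H^2}\Delta h_{ij} - \frac{2}{H^3}\nabla_i H \nabla_j H - \frac{1}{H^2}\big(H(h^2)_{ij} - |A|^2 h_{ij}\big) + \frac{1}{H}(h^2)_{ij},
\end{equation*}
and similarly, from Lemma~\ref{EQ1}(iii), $\partial_t H = H^{-2}\Delta H - 2H^{-3}|\nabla H|^2 - |A|^2 H^{-1}$. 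Combining these, the evolution of $M_{ij} = h_{ij} - \varepsilon H g_{ij}$ takes the form $\partial_t M_{ij} = H^{-2}\Delta M_{ij} + (\text{first-order terms}) + N_{ij}$, where the reaction term $N_{ij}$ is a polynomial expression in $h_{ij}$, $H$, $g_{ij}$; one must also account for the term coming from $\partial_t g_{ij} = \frac{2}{H} h_{ij}$ when differentiating the $g_{ij}$ in the definition of $M_{ij}$.

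The heart of Hamilton's maximum principle is the null-eigenvector condition: at a point and time where $M_{ij}$ first acquires a zero eigenvalue in direction $v$, one must check that $N_{ij} v^i v^j \geq 0$, i.e. that the reaction term points back into the cone of nonnegative tensors, and also verify that the gradient terms do not spoil this (Hamilton's trick: at such a point one may choose coordinates so the offending first-order contributions vanish or have the right sign). So the key computation is: assuming $h_{ij} \geq \varepsilon H g_{ij}$ everywhere and $h_{ij}v^j = \varepsilon H v^i$ at the critical point (with $|v|=1$), show $N_{ij}v^iv^j \geq 0$. The relevant algebra reduces to controlling $(h^2)_{ij}v^iv^j = |hv|^2 = \varepsilon^2 H^2$ against $|A|^2 = \sum \kappa_k^2$ and $H = \sum \kappa_k$, using the pinching $\kappa_k \geq \varepsilon H$ for all $k$; the constraint $\varepsilon \leq \frac{1}{n}$ is exactly what is needed for consistency (if all $\kappa_k = \varepsilon H$ then $H = n\varepsilon H$, forcing $\varepsilon = \frac{1}{n}$, and $\varepsilon > \frac{1}{n}$ is geometrically impossible). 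I would carefully track the signs and use $|A|^2 \geq \frac{1}{n}H^2$ together with $\kappa_k \geq \varepsilon H \geq 0$ to conclude $N_{ij}v^iv^j \geq 0$.

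I would then invoke Hamilton's maximum principle for systems (tensor version) to conclude that $M_{ij} \geq 0$, hence $h_{ij} \geq \varepsilon H g_{ij}$, is preserved for all $t \geq 0$, with the same $\varepsilon$. Since Theorem~\ref{th1.2} guarantees long-time existence and smoothness of the flow, there is no issue with finite-time breakdown. The main obstacle I anticipate is the bookkeeping in the reaction-term computation: assembling $N_{ij}$ correctly (in particular the contribution from the time-derivative of the metric factor in $\varepsilon H g_{ij}$, which produces a term $-2\varepsilon h_{ij}$ that must be merged with the others), and then showing the resulting quadratic form is nonnegative on null eigenvectors under only the hypothesis $\varepsilon \in [0,\frac{1}{n}]$. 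A secondary technical point is justifying that the flow preserves the convexity/positivity needed for $H>0$ to persist (so that $H^{-2}$ makes sense as the coefficient of the Laplacian throughout); this follows from Lemma~\ref{EQ1}(iii) via the scalar maximum principle, or simply from $\varepsilon = 0$ giving $h_{ij} \geq 0$, hence $H > 0$, as a special case already contained in the statement.
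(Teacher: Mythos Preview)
Your proposal is correct and follows essentially the same route as the paper: define $M_{ij}=h_{ij}-\varepsilon H g_{ij}$, use the Simons identity together with Lemma~\ref{EQ1} to derive $\partial_t M_{ij}-H^{-2}\Delta M_{ij}$ (including the $-2\varepsilon h_{ij}$ contribution from $\partial_t g_{ij}$ that you flagged), observe that the gradient terms are expressible through $\nabla M_{ij}$ via $\nabla H=(1-n\varepsilon)^{-1}g^{kl}\nabla M_{kl}$, and then check the null-eigenvector condition $N_{ij}Y^iY^j=\tfrac{2\varepsilon}{H}(|A|^2-\varepsilon H^2)|Y|^2\geq 0$ using $|A|^2\ge \tfrac{1}{n}H^2$ and $\varepsilon\le\tfrac{1}{n}$, with $H>0$ ensured by Lemma~3.5. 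Hamilton's tensor maximum principle then closes the argument exactly as you outlined.
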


\begin{proof}
Let $M_{ij}=h_{ij}-\varepsilon Hg_{ij}$, calculating directly by Lemma 3.1 yields
\begin{eqnarray}\label{3.1}
\begin{split}
\frac{\partial}{\partial t}M_{ij}=&\frac{1}{H^2}\nabla_i\nabla_jH-\frac{2}{H^3}\nabla_iH\nabla_jH+\frac{1}{H}(h^2)_{ij}
-\frac{\varepsilon}{H^2}\triangle Hg_{ij}\\&+\frac{2\varepsilon}{H^3}|\nabla H|^2g_{ij}+\frac{|A|^2}{H}\varepsilon g_{ij}
-2\varepsilon h_{ij}.
\end{split}
\end{eqnarray}

On the other hand, combining the  Simons' type identity
$$
\triangle h_{ij}=\nabla _i\nabla_jH-h_{ij}|A|^2+H(h^2)_{ij},
$$
we have
\begin{equation}\label{3.2}
\begin{split}
\triangle M_{ij}&=\triangle h_{ij}-\varepsilon \triangle Hg_{ij}\\
&=\nabla_i\nabla_jH-h_{ij}|A|^2+H(h^2)_{ij}-\varepsilon\triangle Hg_{ij}.
\end{split}
\end{equation}

Combination of \eqref{3.1} and \eqref{3.2} gives
\begin{eqnarray*}
\begin{split}
\frac{\partial}{\partial t}M_{ij}-\frac{1}{H^2}\triangle M_{ij}=&-\frac{2}{H^3}\nabla_iH\nabla_jH+\frac{2\varepsilon}{H^3}|\nabla H|^2g_{ij}
+\frac{|A|^2}{H}M_{ij}\\&+\frac{2\varepsilon}{H}\left(|A|^2g_{ij}-Hh_{ij}\right).
\end{split}
\end{eqnarray*}

Notice that
$$
\nabla H=\frac{g^{ij}\nabla M_{ij}}{1-n\varepsilon},
$$
and set
$$
N_{ij}=\frac{2\varepsilon}{H}\left(|A|^2g_{ij}-Hh_{ij}\right).
$$

If
$Y=\{Y^i\}$ is the null-eigenvector of $M_{ij}$, i.e., $M_{ij}Y^j=0$, equivalently,
$$h_{ij}Y^j=\varepsilon HY_i,$$
then since $0\leq\varepsilon\leq\frac{1}{n}$ by assumption, we have by Lemma 3.5
$$
N_{ij}Y^iY^j=\frac{2\varepsilon}{H}\left(|A|^2-\varepsilon H^2\right)|Y|^2\geq 0.
$$
Hence, the proof is completed by the Hamilton maximum principle.
\end{proof}

The result in Proposition 3.6 is also suitable for the rescaled inverse mean curvature flow \eqref{flow2} only multiplying a positive factor on both sides, i.e.,
\begin{equation}\label{ineq2}
\tilde{h}_{ij}-\varepsilon\tilde{H}\tilde{g}_{ij}\geq 0
\end{equation}
is also preserved under the rescaled inverse mean curvature $\eqref{flow2}$.

In order to study the first nonzero eigenvalues under the inverse mean curvature flow \eqref{flow1} and the rescaled flow \eqref{flow2} more precisely, we take
$$
\varepsilon(t)=\frac{1}{n}-\exp(-\alpha
t+\beta),~~~~0\leq\frac{\alpha}{2}\leq\frac{1}{n},~~\beta=\ln(\frac{1}{n}-\frac{\alpha}{2}).
$$

Notice that $\varepsilon(t)$ is also well defined for $\frac{\alpha}{2}=\frac{1}{n}$. We {\bf claim} that $\varepsilon(t)$ has the following properties

 $(i)$ $0\leq\varepsilon(t)\leq\frac{1}{n}$;

 $(ii)$ $\varepsilon(t)\nearrow\frac{1}{n}$, as $t\rightarrow \infty$;

 $(iii)$ $0\leq\varepsilon(t)+\frac{\varepsilon'(t)}{2\varepsilon(t)}\leq\frac{1}{n}$
 for all time $t\in [0,~\infty)$.

The properties $(i)$ and $(ii)$ are obvious from the definition of $\varepsilon(t)$, we will check  $(iii)$ below. In fact, as $0<\exp(-\alpha t)\leq 1$ in the interval $[0, \infty)$,
$$\exp(-\alpha t)(\frac{1}{n}-\frac{\alpha}{2})\leq\frac{1}{n}-\frac{\alpha}{2},$$
that is
$$\exp(-\alpha t+\beta)\leq\frac{1}{n}-\frac{\alpha}{2}.$$
Multiplying both sides  with $\exp(-\alpha t+\beta)$ gives
\begin{equation}\label{epslt}
\frac{1}{n}\exp(-\alpha t+\beta)-\frac{\alpha}{2}\exp(-\alpha t+\beta)\geq\exp2(-\alpha t+\beta).
\end{equation}

Notice that
$$\varepsilon'(t)=\alpha\exp(-\alpha t+\beta),$$
and
$$\varepsilon^2(t)=\frac{1}{n^2}+\exp2(-\alpha t+\beta)-\frac{2}{n}\exp(-\alpha t+\beta).$$
Rearranging \eqref{epslt} gives
\begin{eqnarray*}
\begin{split}
\frac{2}{n}\left(\frac{1}{n}-\exp(-\alpha t+\beta)\right)\geq & 2\left(\frac{1}{n^2}-\frac{2}{n}\exp(-\alpha t+\beta)+\exp2(-\alpha t+\beta)\right)\\ &+\frac{d}{dt}\left(\frac{1}{n}-\exp(-\alpha t+\beta)\right),
\end{split}
\end{eqnarray*}
namely
$$2\varepsilon^2(t)+\varepsilon'(t)\leq\frac{2}{n}\varepsilon(t),$$
which proves the property $(iii)$, and the claim follows.

Let
$$
\overline{M}_{ij}=h_{ij}-\varepsilon(t)Hg_{ij}.
$$
Calculating the evolution of $\overline{M}_{ij}$ under inverse mean curvature flow \eqref{flow1} similarly as in the proof of Proposition 3.6 we have
\begin{equation*}
\begin{split}
\frac{\partial}{\partial t}\overline{M}_{ij}-\frac{1}{H^2}\triangle \overline{M}_{ij}
=&-\frac{2}{H^3}\nabla_iH\nabla_jH+\frac{2\varepsilon(t)}{H^3}|\nabla
H|^2g_{ij}
+\frac{|A|^2}{H}\overline{M}_{ij}\\
&+\frac{2\varepsilon(t)}{H}\left(|A|^2g_{ij}-Hh_{ij}\right)-\varepsilon'(t)Hg_{ij}.
\end{split}
\end{equation*}

Set
$$
\overline{N}_{ij}=\frac{2\varepsilon(t)}{H}\left(|A|^2g_{ij}-Hh_{ij}\right)-\varepsilon'(t)Hg_{ij},
$$
and let $\overline{Y}$ be the null-eigenvector of $\overline{M}_{ij}$, then
$$
\overline{N}_{ij}\overline{Y}^i\overline{Y}^j=\frac{2\varepsilon(t)}{H}\left[|A|^2-\left(\varepsilon(t)+\frac{\varepsilon'(t)}{2\varepsilon}\right)H^2\right]|\overline{Y}|^2.
$$

By the property  $(iii)$ of $\varepsilon(t)$ and Lemma 3.5, we also have the following result by Hamilton maximum principle.

\begin{theorem}
Let $(M^{n}_t, g_{ij}(t))$ be the solution of inverse mean flow $\eqref{flow1}$, assuming $H>0$ and $h_{ij}\geq\varepsilon(0)Hg_{ij}$ at $t=0$ with $0\leq\varepsilon(0)=\frac{\alpha}{2}\leq\frac{1}{n}$, then
$$
h_{ij}\geq\varepsilon(t)Hg_{ij}
$$
remains true for all time $t\in[0,~\infty)$.
\end{theorem}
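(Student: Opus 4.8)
\emph{Proof proposal.} The plan is to apply Hamilton's maximum principle for symmetric $2$-tensors to $\overline{M}_{ij}=h_{ij}-\varepsilon(t)Hg_{ij}$, all of whose ingredients are already assembled above. First I would record the preliminaries. Since $h_{ij}\ge\tfrac{\alpha}{2}Hg_{ij}\ge 0$ and $H>0$ at $t=0$, the initial hypersurface $M^n$ is convex, hence star-shaped, so by Theorem~\ref{th1.2} the flow \eqref{flow1} exists for all $t\in[0,\infty)$ through smooth compact embedded hypersurfaces with $H>0$ maintained throughout (indeed $H(t)\ge C_1e^{-t}>0$); thus $\varepsilon(t)$, $\overline{M}_{ij}$, and the evolution equation for $\overline{M}_{ij}$ displayed just before the statement are all defined on $[0,\infty)$, and at $t=0$ we have $\overline{M}_{ij}=h_{ij}-\varepsilon(0)Hg_{ij}=h_{ij}-\tfrac{\alpha}{2}Hg_{ij}\ge 0$. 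The borderline case $\alpha=\tfrac{2}{n}$ is disposed of immediately: then $\varepsilon(t)\equiv\tfrac{1}{n}$, so $g^{ij}\overline{M}_{ij}\equiv 0$, and a nonnegative trace-free tensor vanishes, forcing $h_{ij}=\tfrac{1}{n}Hg_{ij}$, i.e. $M^n$ is a round sphere and the assertion is trivial; hence we may assume $\alpha<\tfrac{2}{n}$, which gives $n\varepsilon(t)<1$ for every finite $t$.

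The evolution equation for $\overline{M}_{ij}$ computed above is a tensor reaction–diffusion equation,
$$
\frac{\partial}{\partial t}\overline{M}_{ij}=\frac{1}{H^2}\triangle\overline{M}_{ij}-\frac{2}{H^3}\nabla_iH\nabla_jH+\frac{2\varepsilon(t)}{H^3}|\nabla H|^2g_{ij}+\frac{|A|^2}{H}\overline{M}_{ij}+\overline{N}_{ij},
$$
with strictly positive scalar diffusion $\tfrac{1}{H^2}\triangle$. To bring it into the scope of Hamilton's principle I would dispose of the first-order terms: taking the $g$-trace of $\overline{M}_{ij}$ gives $g^{ij}\overline{M}_{ij}=(1-n\varepsilon(t))H$, hence $\nabla_kH=\tfrac{1}{1-n\varepsilon(t)}g^{ij}\nabla_k\overline{M}_{ij}$, so $-\tfrac{2}{H^3}\nabla_iH\nabla_jH+\tfrac{2\varepsilon(t)}{H^3}|\nabla H|^2g_{ij}$ is an admissible gradient term contributing nothing at a point where $\overline{M}$ first touches the boundary of the cone of nonnegative tensors. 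The zeroth-order reaction is $\tfrac{|A|^2}{H}\overline{M}_{ij}+\overline{N}_{ij}$; the first summand is tangent to the cone (it annihilates any null-eigenvector), so everything reduces to the null-eigenvector condition for $\overline{N}_{ij}$.

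Finally I would verify this null-eigenvector condition: if $\overline{M}_{ij}\ge 0$ at some point with $\overline{M}_{ij}\overline{Y}^j=0$, then by the identity recorded just above the statement,
$$
\overline{N}_{ij}\overline{Y}^i\overline{Y}^j=\frac{2\varepsilon(t)}{H}\Big[|A|^2-\Big(\varepsilon(t)+\frac{\varepsilon'(t)}{2\varepsilon(t)}\Big)H^2\Big]|\overline{Y}|^2 .
$$
Since $\varepsilon(t)\ge 0$ and $H>0$, the sign is that of the bracket; and by Cauchy–Schwarz $|A|^2=\sum_i\kappa_i^2\ge\tfrac{1}{n}\big(\sum_i\kappa_i\big)^2=\tfrac{1}{n}H^2$ together with property $(iii)$, $\varepsilon(t)+\tfrac{\varepsilon'(t)}{2\varepsilon(t)}\le\tfrac{1}{n}$, the bracket is $\ge 0$ (when $\alpha=0$, $\varepsilon(t)\equiv 0$ and $\overline{N}_{ij}\equiv 0$, so this is trivial). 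Hence the null-eigenvector condition holds, and Hamilton's maximum principle applied on each compact interval $[0,T]$, followed by $T\to\infty$, yields $\overline{M}_{ij}\ge 0$, i.e. $h_{ij}\ge\varepsilon(t)Hg_{ij}$, for all $t\in[0,\infty)$.

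The only delicate point, and the place where care is needed, is the treatment of the gradient terms $-\tfrac{2}{H^3}\nabla_iH\nabla_jH+\tfrac{2\varepsilon(t)}{H^3}|\nabla H|^2g_{ij}$: one must be sure that, after substituting $\nabla H$ via $g^{ij}\nabla_k\overline{M}_{ij}$, this combination genuinely counts as a lower-order term that Hamilton's principle permits us to discard at a first touching time — which is precisely why confirming $n\varepsilon(t)<1$ for finite $t$ matters. Everything else is the routine algebra displayed above and a direct invocation of the tensor maximum principle.
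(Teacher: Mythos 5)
Your proposal is correct and follows essentially the same route as the paper: Hamilton's tensor maximum principle applied to $\overline{M}_{ij}=h_{ij}-\varepsilon(t)Hg_{ij}$, with the reaction term $\overline{N}_{ij}$ checked on null-eigenvectors via property $(iii)$ of $\varepsilon(t)$ together with $|A|^2\geq H^2/n$, and the gradient terms absorbed through the identity $\nabla H=(1-n\varepsilon)^{-1}g^{ij}\nabla\overline{M}_{ij}$. Your extra touches --- long-time existence, the degenerate case $\alpha=\frac{2}{n}$ where $1-n\varepsilon(t)=0$, and identifying $|A|^2\geq H^2/n$ as Cauchy--Schwarz (which the paper attributes, apparently by a slip, to Lemma 3.5) --- are refinements of the same argument rather than a different one.
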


Similarly,
\begin{equation}\label{3.12}
\tilde{h}_{ij}\geq \varepsilon(t)\tilde{H}\tilde{g}_{ij}
\end{equation}
is also preserved under the rescaled inverse mean curvature
$\eqref{flow2}$.

\section{Monotonicity of the eigenvalues under inverse mean curvature flow}

Before giving the proof of Theorem 1.1, we firstly discuss the differentiability of $\lambda_{1,p}(t)$ and the monotonicity of  $\lambda_{1,p}(t)$ and $\lambda_{1}(t)$ under the inverse mean curvature flow \eqref{flow1} and the rescaled inverse mean curvature flow \eqref{flow2}, and obtain the following consequence.
\begin{proposition}
Let $(M^n_t,g(t))$ be a solution of the inverse mean curvature flow \eqref{flow1}, and  $\lambda_{1,p}(t)$ and  $\lambda_{1}(t)$ be the first nonzero closed eigenvalues of $p$-Laplace and Laplace operators on hypersurface $M_t$. Assume $H> 0$ and $h_{ij}\geq\varepsilon H g_{ij}$ at $t=0$, where $0\leq\varepsilon\leq\frac{1}{n}$. Then under the inverse mean curvature flow $\eqref{flow1}$,  $\lambda_{1,p}(t)$ is differentiable almost everywhere,  $\lambda_{1,p}(t)$ and $\lambda_{1}(t)$ are non-increasing for all time $t\in[0,~\infty)$, and they tend to zero as t tends to infinity. Moreover, under the rescaled inverse mean curvature flow \eqref{flow2}, $\tilde{\lambda}_{1,p}(t)$ is differentiable almost everywhere,  and $e^{-p(\frac{1}{n}-\varepsilon)t}\tilde{\lambda}_{1,p}(t)$ and $e^{-2(\frac{1}{n}-\varepsilon)t}\tilde{\lambda}_{1}(t)$ are non-increasing for all time.
\end{proposition}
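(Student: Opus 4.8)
The plan is to combine the evolution equation from Proposition~2.1 (specialized to $f=1/H$) with the pinching estimate $h_{ij}\ge\varepsilon(t)Hg_{ij}$ from Theorem~3.7 to extract a sign for $\frac{d}{dt}\lambda_{1,p}(u,t)$. First I would substitute $f=1/H$ into \eqref{evp}. The troublesome term is $-p\int_{M_t}\frac{1}{H}|\nabla u|^{p-2}u_iu_jh^{ij}\,d\mu$; here the pinching hypothesis $h^{ij}\ge\varepsilon(t)Hg^{ij}$ (raised indices, valid since $h_{ij}\ge\varepsilon H g_{ij}$) gives $u_iu_jh^{ij}\ge\varepsilon(t)H|\nabla u|^2$, hence $-p\int\frac1H|\nabla u|^{p-2}u_iu_jh^{ij}\,d\mu\le -p\,\varepsilon(t)\int|\nabla u|^p\,d\mu=-p\,\varepsilon(t)\lambda_{1,p}(u,t)$. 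For the remaining two terms, using $f=1/H$ turns them into $\int_{M_t}|\nabla u|^p\,d\mu-\lambda_{1,p}(t)\int_{M_t}|u|^p\,d\mu=\lambda_{1,p}(u,t)-\lambda_{1,p}(t)$; and since $u$ is the eigenfunction at time $t$ we have $\lambda_{1,p}(u,t)=\lambda_{1,p}(t)$, so these two terms cancel exactly at time $t$. This yields $\frac{d}{dt}\lambda_{1,p}(u,t)\big|_{t}\le -p\,\varepsilon(t)\lambda_{1,p}(t)\le 0$.

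Next I would deal with differentiability: since we do not know $\lambda_{1,p}(t)$ is differentiable for $p\neq2$, I would follow the standard device (as in \cite{cao,cao1,Mao,zhao12}). Fix $t_0$, let $u_0$ be the eigenfunction at $t_0$, and propagate it along the flow to a smooth family $u(\cdot,t)$ renormalized so that $\int_{M_t}|u|^p\,d\mu=1$ and $\int_{M_t}|u|^{p-2}u\,d\mu=0$; then $\lambda_{1,p}(u,t)$ is smooth, $\lambda_{1,p}(u,t)\ge\lambda_{1,p}(t)$ for all $t$ by the variational characterization \eqref{df1}, and $\lambda_{1,p}(u,t_0)=\lambda_{1,p}(t_0)$. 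The computation above shows $\frac{d}{dt}\lambda_{1,p}(u,t)\big|_{t_0}\le 0$, and a comparison argument (the smooth upper barrier $\lambda_{1,p}(u,t)$ touches $\lambda_{1,p}(t)$ from above at $t_0$ with nonpositive derivative there) gives that $\lambda_{1,p}(t)$ is non-increasing; it then follows from monotonicity that $\lambda_{1,p}(t)$ has bounded variation and is differentiable almost everywhere. To see $\lambda_{1,p}(t)\to0$, I would use the variational characterization with a fixed test function: take any non-constant $\varphi$ on $\mathbb{S}^n$, transplant it to $M_t$, and note that Theorem~3.3 gives $\mathrm{diam}(M_t)$ and $\mathrm{vol}(M_t)$ growing like $e^{t/n}$ while the gradient scales like $e^{-t/n}$, so the Rayleigh quotient $\to0$; alternatively use Lemma~3.4 ($H\le C_2e^{-t}$) together with $\mathrm{vol}(M_t)=\mathrm{vol}(M_0)e^t$ to bound $\lambda_{1,p}(t)$ directly.

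For the rescaled flow, I would exploit the scaling relations $\tilde g_{ij}=e^{-2t/n}g_{ij}$ and the fact that the eigenvalues of the $p$-Laplacian scale homogeneously: under $\tilde g=c^2 g$ one has $\tilde\lambda_{1,p}=c^{-p}\lambda_{1,p}$ (since $|\tilde\nabla u|^p_{\tilde g}\,d\tilde\mu=c^{n-p}|\nabla u|^p_g\,d\mu$ and $|u|^p\,d\tilde\mu=c^n|u|^p\,d\mu$), so with $c=e^{-t/n}$ we get $\tilde\lambda_{1,p}(t)=e^{pt/n}\lambda_{1,p}(t)$. Then
\begin{equation*}
\frac{d}{dt}\Bigl(e^{-p(\frac1n-\varepsilon)t}\tilde\lambda_{1,p}(t)\Bigr)
=\frac{d}{dt}\Bigl(e^{p\varepsilon t}\lambda_{1,p}(t)\Bigr)
=e^{p\varepsilon t}\bigl(p\varepsilon\,\lambda_{1,p}(t)+\tfrac{d}{dt}\lambda_{1,p}(t)\bigr),
\end{equation*}
and here I must be slightly careful: Proposition~4.1 uses the \emph{constant} $\varepsilon=\varepsilon(0)$, whereas the sharp bound above gives $\frac{d}{dt}\lambda_{1,p}\le -p\,\varepsilon(t)\lambda_{1,p}$ with $\varepsilon(t)\ge\varepsilon(0)$ (property $(ii)$ of $\varepsilon(t)$, monotone increasing); hence $p\varepsilon(0)\lambda_{1,p}+\frac{d}{dt}\lambda_{1,p}\le p(\varepsilon(0)-\varepsilon(t))\lambda_{1,p}\le0$, so $e^{p\varepsilon(0)t}\lambda_{1,p}(t)$ is non-increasing, i.e. $e^{-p(\frac1n-\varepsilon)t}\tilde\lambda_{1,p}(t)$ is non-increasing. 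The case $p=2$ is identical starting from \eqref{ev2}, with the coefficient $2$ in place of $p$ throughout, and uses that $\lambda_1(t)$ is genuinely differentiable (standard for $p=2$) so no barrier argument is needed there.

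I expect the main obstacle to be the differentiability/comparison step for $p\neq2$: one must justify that the smooth family $u(\cdot,t)$ can be constructed (solving a transport-type ODE along the flow and renormalizing the two integral constraints without the Rayleigh quotient degenerating), and that "$\lambda_{1,p}(u,t)\ge\lambda_{1,p}(t)$ with equality at $t_0$ and $\frac{d}{dt}\lambda_{1,p}(u,t)|_{t_0}\le0$" rigorously forces $\lambda_{1,p}$ to be non-increasing — this is where the literature \cite{cao,cao1,Mao,zhao12} does the real work, and I would simply invoke it. Everything else is the algebraic bookkeeping of plugging $f=1/H$ into Proposition~2.1, applying the pinching $h^{ij}\ge\varepsilon(t)Hg^{ij}$, and tracking the exponential factors under rescaling.
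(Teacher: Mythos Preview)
Your proposal is correct and largely parallels the paper, but with two differences worth flagging. First, you invoke the time-dependent pinching $\varepsilon(t)$ of Theorem~3.7, whereas for Proposition~4.1 the paper uses only the constant-$\varepsilon$ preservation of Proposition~3.6; since the statement is formulated with a fixed $\varepsilon$, the constant version suffices, and your detour through $\varepsilon(t)\ge\varepsilon(0)$ is harmless but unnecessary here (the paper reserves Theorem~3.7 for the sharper estimate in Theorem~1.1). Second, and more substantively, for the rescaled flow you reduce to the unrescaled case via the scaling identity $\tilde\lambda_{1,p}(t)=e^{pt/n}\lambda_{1,p}(t)$, so that monotonicity of $e^{-p(\frac1n-\varepsilon)t}\tilde\lambda_{1,p}(t)$ is literally the monotonicity of $e^{p\varepsilon t}\lambda_{1,p}(t)$; the paper instead rederives the evolution equation under the rescaled flow from scratch (Lemma~4.3, giving $\frac{d}{dt}\tilde\lambda_{1,p}(\tilde u,t)=\frac{p}{n}\tilde\lambda_{1,p}(t)-p\int_{\tilde M_t}\tilde H^{-1}|\tilde\nabla\tilde u|^{p-2}\tilde u_i\tilde u_j\tilde h^{ij}\,d\tilde\mu$) and repeats the full barrier comparison for $e^{-p(\frac1n-\varepsilon)t}\tilde\lambda_{1,p}(\tilde u,t)$. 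Your route is shorter and more conceptual; the paper's is more self-contained. One caution: in your rescaled step you differentiate $\lambda_{1,p}(t)$ directly, which you have only shown to be differentiable a.e.; to make this airtight you should run the barrier argument on the smooth quantity $e^{p\varepsilon t}\lambda_{1,p}(u,t)$ (which touches $e^{p\varepsilon t}\lambda_{1,p}(t)$ from above with nonpositive derivative at $t_0$), exactly as the paper does for its rescaled version. Finally, the paper gets $\lambda_{1,p}(t)\to0$ simply by integrating to $\lambda_{1,p}(t)\le\lambda_{1,p}(0)e^{-p\varepsilon t}$, which is quicker than your test-function proposal --- though your scaling alternative $\lambda_{1,p}(t)=e^{-pt/n}\tilde\lambda_{1,p}(t)$ with $\tilde\lambda_{1,p}$ bounded also works and has the virtue of covering the borderline case $\varepsilon=0$.
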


Let $f=\frac{1}{H}$ in Proposition 2.1, the evolution equations for both eigenvalues under inverse mean curvature flow $\eqref{flow1}$ can be obtained immediately

\begin{lemma}
Let $(M^n_t,g(t))$ be the solution of inverse mean curvature flow $\eqref{flow1}$, $\lambda_1(t)$ be the first nonzero eigenvalue of Laplace operator, $\lambda_{1,p}(u,t)$ be defined in $\eqref{def2.7}$, and $u=u(x, t)$ be the eigenfunction of $\lambda_{1,p}(t)$ at time $t$, then we have
\begin{equation}\label{evo-plaplace}
\frac{d \lambda_{1,p}(u,t)}{dt}=-p\int_{ M_t}\frac{1}{H}|\nabla
u|^{p-2}u_iu_jh^{ij}d\mu.
\end{equation}

When $p=2$ and $u=u(x,t)$ is the eigenfunction of $\lambda_1(t)$ on $M^n_t$, we have
\begin{equation}\label{4.13}
\frac{d \lambda_{1}(t)}{dt}=-2\int_{
M_t}\frac{1}{H}u_iu_jh^{ij}d\mu.
\end{equation}
\end{lemma}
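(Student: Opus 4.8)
The plan is to specialize the general evolution equations \eqref{evp} and \eqref{ev2} of Proposition 2.1 to the speed function $f=\frac{1}{H}$ of the inverse mean curvature flow \eqref{flow1}, and then to observe that the last two terms on the right-hand side cancel.

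First I would substitute $f=\frac{1}{H}$ into \eqref{evp}. The factor $fH$ occurring in the second and third integrands becomes identically $1$, so those two terms collapse to
$$
\int_{M_t}|\nabla u|^{p}\,d\mu-\lambda_{1,p}(t)\int_{M_t}|u|^{p}\,d\mu .
$$
By the normalization $\int_{M_t}|u|^{p}\,d\mu=1$ imposed on the eigenfunction $u=u(x,t)$, the second integral equals $1$. Moreover, since $u(\cdot,t)$ is the eigenfunction of $\lambda_{1,p}(t)$ at the time $t$ under consideration, the defining identity \eqref{def2.7} gives
$$
\lambda_{1,p}(u,t)=\int_{M_t}|\nabla u|^{p}\,d\mu=\lambda_{1,p}(t),
$$
so the bracketed expression above is $\lambda_{1,p}(t)-\lambda_{1,p}(t)=0$. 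Hence \eqref{evp} reduces precisely to \eqref{evo-plaplace}. The case $p=2$ is handled identically: setting $f=\frac{1}{H}$ in \eqref{ev2}, the term $\int_{M_t}fH|\nabla u|^{2}\,d\mu=\int_{M_t}|\nabla u|^{2}\,d\mu=\lambda_{1}(t)$ (because $u$ is the eigenfunction of $\lambda_{1}(t)$ with $\int_{M_t}|u|^{2}\,d\mu=1$) cancels against $\lambda_{1}(t)\int_{M_t}fH|u|^{2}\,d\mu=\lambda_{1}(t)$, yielding \eqref{4.13}.

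There is essentially no obstacle here beyond bookkeeping: the entire content sits in Proposition 2.1, and the only points worth emphasizing are that one must invoke the eigenfunction normalization $\int_{M_t}|u|^{p}\,d\mu=1$ together with the coincidence $\lambda_{1,p}(u,t)=\lambda_{1,p}(t)$ at the instant $t$ in question. Since $\lambda_{1,p}(u,t)$ is a smooth function of $t$ by construction in \eqref{def2.7}, differentiating it raises no regularity issue, and the resulting identity is to be read as holding pointwise in $t$ for the chosen family of eigenfunctions.
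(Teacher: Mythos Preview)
Your proof is correct and matches the paper's approach exactly: the paper simply states that the lemma follows ``immediately'' by setting $f=\frac{1}{H}$ in Proposition~2.1, and you have merely spelled out the cancellation $\int_{M_t}|\nabla u|^{p}\,d\mu-\lambda_{1,p}(t)\int_{M_t}|u|^{p}\,d\mu=0$ that the paper leaves implicit.
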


Combining $\eqref{evo-plaplace}$ and Proposition 3.6 gives
\begin{equation}\label{4.0}
\frac{d \lambda_{1,p}(u,t)}{dt}\leq-p\varepsilon\lambda_{1,p}(t)\leq 0,
\end{equation}
that's to say, $\lambda_{1,p}(u,t)$ is non-increasing under inverse mean curvature flow \eqref{flow1}. We will discuss the monotonicity and differentiability of the first nonzero eigenvalue $\lambda_{1,p}(t)$ of $p$-Laplace operator under inverse mean curvature flow \eqref{flow1} similarly as in \cite{Mao,zhao12}.

From \eqref{4.0} we know, at time $t_0\in[0,~\infty)$
\begin{equation}\label{4.1}
\frac{d \lambda_{1,p}(u,t)}{dt}\mid_{t_0}\leq 0.
\end{equation}
Furthermore, we also know by the definition of $\lambda_{1,p}(u,t)$ in \eqref{def2.7} that $\lambda_{1,p}(u,t)$ is smooth w.r.t. time $t$, and then the above inequality still holds for any small neighbourhood of $t_0$, i.e., for sufficiently small $\delta>0$, we have \eqref{4.1} in the interval $[t_0,t_0+\delta]$. Integrating both sides of \eqref{4.1} in the interval $[t_0,t_0+\delta]$ yields
$$
\lambda_{1,p}\left(u(\cdot,t_0+\delta),t_0+\delta\right)\leq\lambda_{1,p}(u(\cdot,t_0),t_0).
$$

On the other hand, noticing that $u=u(x,t)$ is the eigenfunction at time $t_0$, we have
$$\lambda_{1,p}(u(\cdot,t_0),t_0)=\lambda_{1,p}(t_0).$$
By the definitions of $\lambda_{1,p}(t)$ in $\eqref{df1}$ and $\lambda_{1,p}(u,t)$ in \eqref{def2.7}, we know
$$\lambda_{1,p}(u(\cdot,t_0+\delta),t_0+\delta)\geq\lambda_{1,p}(t_0+\delta).$$

Combining the above facts we obtain
$$\lambda_{1,p}(t_0)\geq\lambda_{1,p}(t_0+\delta).$$

 Since $t_0\in[0,\infty)$ is arbitrary, $\lambda_{1,p}(t)$ is non-increasing under the inverse mean curvature flow \eqref{flow1} for all times, and the differentiability can be obtained by Lebesgue's theorem.

Thus we can replace $\lambda_{1,p}(u,t)$ by $\lambda_{1,p}(t)$ in \eqref{4.0} immediately, and integrate both sides in the interval $[t_0,t_0+\delta]$ to get
\begin{equation}
\lambda_{1,p}(t)\leq\lambda_{1,p}(0)e^{-p\varepsilon t},
\end{equation}
that is, a decreasing upper bound for $\lambda_{1,p}(t)$ is obtained along inverse mean curvature flow \eqref{flow1}, and it also means that $\lambda_{1,p}(t)$ tends to zero when $t$ tends to infinity.

The corresponding conclusion to $\lambda_{1}(t)$ can be obtained from \eqref{4.13} and Proposition 3.6 similarly, so we complete the proof for the first part of Proposition 4.1.

Obviously, we can't obtain the evolution equations for both eigenvalues under the rescaled flow \eqref{flow2} from Proposition 2.1, but it is not difficult from Lemma 3.2 by a similar process of the proof of Proposition 2.1.

\begin{lemma}
Let $(\tilde{M}^n_t,\tilde{g})$ be the solution of the rescaled flow $\eqref{flow2}$, $\tilde{\lambda}_{1,p}(t)$ and $\tilde{\lambda}_1(t)$ be the eigenvalues of $p$-Laplace and Laplace operators on hypersurface $\tilde{M}^n_t$, $\tilde{u}=\tilde{u}(x,t)$ be the eigenfunction of $\tilde\lambda_{1,p}(t)$ at time $t$, $\tilde\lambda_{1,p}(\tilde{u},t)$ be defined in $\eqref{def2.7}$, then at time $t$, we have
\begin{equation}\label{4.19}
\frac{d
\tilde{\lambda}_{1,p}(\tilde{u},t)}{dt}=\frac{p}{n}\tilde{\lambda}_{1,p}(t)-p\int_{\tilde
M_t}\frac{1}{\tilde{H}}|\tilde{\nabla} \tilde
u|^{p-2}\tilde{u}_i\tilde{u}_j\tilde{h}^{ij}d\tilde\mu.
\end{equation}

When $p=2$ and $\tilde{u}=\tilde{u}(x,t)$ is the eigenfunction of $\tilde{\lambda}_1(t)$ on $\tilde{M}^n_t$, we have
\begin{equation}\label{4.20}
\frac{d
\tilde{\lambda}_{1}(t)}{dt}=\frac{2}{n}\tilde{\lambda}_{1}(t)-2\int_{\tilde
M_t}\frac{1}{\tilde{H}}\tilde{u}_i\tilde{u}_j\tilde{h}^{ij}d\tilde\mu.
\end{equation}
\end{lemma}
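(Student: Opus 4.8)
The rescaled flow \eqref{flow2} is not of the pure normal form $\partial_t X=f\nu$, so Proposition 2.1 does not apply directly; the plan is to re-run its proof on $\tilde M_t$ with the rescaled evolution equations of Lemma 3.2 in place of those of Lemma 2.2. The key observation is that the proof of Proposition 2.1 used only the evolution of the induced metric, the induced evolution of the contracted Christoffel symbols (which follows from the metric evolution and the Codazzi identity), and the evolution of the volume form, and all of these are supplied for \eqref{flow2} by Lemma 3.2. So I would repeat the steps \eqref{2.9}--\eqref{2.14} on $\tilde M_t$ with $f$ replaced by $\tilde f=1/\tilde H$ and $B$ by $\tilde B=|\tilde\nabla\tilde u|^{p-2}$. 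Exactly two features differ from the old computation: (i) $\partial_t\tilde g^{ij}=-\tfrac2{\tilde H}\tilde h^{ij}+\tfrac2n\tilde g^{ij}$ carries the extra homothetic term $\tfrac2n\tilde g^{ij}$, and Lemma 3.2(iv) carries the extra term $-\tfrac2n\tilde g^{ij}\tilde\Gamma^k_{ij}$ relative to the space-form formula for $g^{ij}\partial_t\Gamma^k_{ij}$; and (ii) $\partial_t\,d\tilde\mu=0$ by Lemma 3.2(v). Feature (ii) is harmless: in the proof of Proposition 2.1 the factor $\partial_t\,d\mu$ enters only through the term $\int_{M_t}g^{ij}(Bu_i)_j\big(pu_t\,d\mu+u\,\partial_t\,d\mu\big)$, which vanishes by the normalization $\int|u|^p\,d\mu=1$ together with the eigenvalue equation; with $\partial_t\,d\tilde\mu=0$ that term is only simpler and still vanishes, and the $\tilde u_t$-terms cancel exactly as before.

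All the novelty therefore lies in feature (i). The extra Christoffel term $-\tfrac2n\tilde g^{ij}\tilde\Gamma^k_{ij}$ contributes nothing, since the computation is carried out at the centre of normal coordinates, where $\tilde\Gamma^k_{ij}=0$. The extra metric term $\tfrac2n\tilde g^{ij}$ appears, each time multiplied by $\tilde u$ and integrated against $d\tilde\mu$, in precisely three spots of the expansion \eqref{2.9}: inside $\partial_t\tilde B$ it produces the $\tfrac1n$-part $\tfrac{p-2}{n}\tilde B$, which feeds into both $\tilde g^{ij}\tilde B_{ti}\tilde u_j\tilde u$ and $\tilde B_t\tilde\triangle\tilde u\,\tilde u$; in the leading term it produces $\tfrac2n\tilde g^{ij}\tilde B_i\tilde u_j\tilde u$; and inside $\partial_t(\tilde\triangle\tilde u)$, through the term $\partial_t\tilde g^{ij}\,\tilde\nabla_i\tilde\nabla_j\tilde u$, it produces $\tfrac2n\tilde\triangle\tilde u$, which feeds into $\tilde B\,\partial_t(\tilde\triangle\tilde u)\,\tilde u$. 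Adding these, the total extra contribution to the time derivative of $\int_{\tilde M_t}\mathrm{div}(\tilde B\tilde\nabla\tilde u)\,\tilde u\,d\tilde\mu$ equals
$$
\frac{p}{n}\int_{\tilde M_t}\big(\tilde g^{ij}\tilde B_i\tilde u_j+\tilde B\,\tilde\triangle\tilde u\big)\tilde u\,d\tilde\mu=\frac{p}{n}\int_{\tilde M_t}\tilde\triangle_p\tilde u\cdot\tilde u\,d\tilde\mu=-\frac{p}{n}\,\tilde\lambda_{1,p}(t),
$$
where I have used the eigenvalue equation and $\int|\tilde u|^p\,d\tilde\mu=1$. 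Since by \eqref{def2.7} the quantity being differentiated is $\tilde\lambda_{1,p}(\tilde u,t)=-\int_{\tilde M_t}\tilde\triangle_p\tilde u\cdot\tilde u\,d\tilde\mu$, this registers as an additional $+\tfrac pn\tilde\lambda_{1,p}(t)$ in $\tfrac{d}{dt}\tilde\lambda_{1,p}(\tilde u,t)$.

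The rest of the computation is literally the proof of Proposition 2.1 with $f=1/\tilde H$: the two terms $\int_{\tilde M_t}\tilde f\tilde H|\tilde\nabla\tilde u|^p\,d\tilde\mu$ and $\tilde\lambda_{1,p}(t)\int_{\tilde M_t}\tilde f\tilde H|\tilde u|^p\,d\tilde\mu$, which occur there just as the corresponding $fH$-terms occur in \eqref{evp}, cancel against each other because $\tilde f\tilde H\equiv1$, leaving exactly $-p\int_{\tilde M_t}\tfrac1{\tilde H}|\tilde\nabla\tilde u|^{p-2}\tilde u_i\tilde u_j\tilde h^{ij}\,d\tilde\mu$, the same shape as \eqref{evo-plaplace}. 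Combined with the $+\tfrac pn\tilde\lambda_{1,p}(t)$ found above this gives \eqref{4.19}. For $p=2$ we have $\tilde B\equiv1$ and $\partial_t\tilde B=0$, so among the homothetic terms only the $\partial_t(\tilde\triangle\tilde u)$-contribution survives, producing $\tfrac2n\tilde\lambda_1(t)$, and one reaches \eqref{4.20}. I expect the only real obstacle to be the bookkeeping of these several $\tfrac1n$-contributions, together with re-checking, as in Proposition 2.1, that every $\tilde u_t$-term cancels; nothing structural is new.

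As an independent check one can bypass the calculation altogether: $\tilde M_t$ is the homothetic image of $M_t$ under scaling by $e^{-t/n}$, under which the $p$-Laplacian transforms by $\tilde\triangle_p=e^{pt/n}\triangle_p$; taking $\tilde u=e^{t/p}u$ keeps $\int|\tilde u|^p\,d\tilde\mu=1$ and gives $\tilde\lambda_{1,p}(\tilde u,t)=e^{pt/n}\lambda_{1,p}(u,t)$. Differentiating in $t$, inserting \eqref{evo-plaplace} (respectively \eqref{4.13} when $p=2$) and rewriting $e^{pt/n}\int_{M_t}\tfrac1H|\nabla u|^{p-2}u_iu_jh^{ij}\,d\mu$ in the tilde variables by means of the scaling relations listed just before \eqref{flow2}, one recovers \eqref{4.19} (respectively \eqref{4.20}).
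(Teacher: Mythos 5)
Your proposal is correct and follows exactly the route the paper intends: the paper gives no actual proof of this lemma, merely asserting that it ``is not difficult from Lemma 3.2 by a similar process of the proof of Proposition 2.1,'' and your bookkeeping of the extra homothetic $\frac{2}{n}\tilde g^{ij}$ contributions (from $\partial_t\tilde g^{ij}$, $\partial_t\tilde B$, and $\partial_t(\tilde\triangle\tilde u)$), which sum to the additional $\frac{p}{n}\tilde\lambda_{1,p}(t)$, is precisely the omitted computation, with the $\tilde f\tilde H\equiv 1$ cancellation handling the remaining terms as in \eqref{evp}. The independent verification via the homothety $\tilde X=e^{-t/n}X$ and the scaling $\tilde\lambda_{1,p}(\tilde u,t)=e^{pt/n}\lambda_{1,p}(u,t)$ is a sound extra check not present in the paper.
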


Assuming $\tilde{H}>0$ initially, combining $\eqref{ineq2}$ and $\eqref{4.19}$  we obtain
\begin{equation}\label{4.26}
\frac{d\tilde{\lambda}_{1,p}(\tilde{u},t)}{dt}\leq p\left(\frac{1}{n}-\varepsilon\right)\tilde{\lambda}_{1,p}(t).
\end{equation}
By the definition of $\lambda_{1,p}(t)$ in $\eqref{df1}$ and $\lambda_{1,p}(u,t)$ in \eqref{def2.7} we know
$$\tilde{\lambda}_{1,p}(t)\leq\tilde{\lambda}_{1,p}(\tilde{u}(\cdot,t),t).$$
Again since $\varepsilon\leq\frac{1}{n}$, \eqref{4.26} becomes
\begin{equation}\label{4.27}
\frac{d\tilde{\lambda}_{1,p}(\tilde{u},t)}{dt}\leq p\left(\frac{1}{n}-\varepsilon\right)\tilde{\lambda}_{1,p}(\tilde{u},t),
\end{equation}
which is equivalent that $e^{-p(\frac{1}{n}-\varepsilon)t}\tilde{\lambda}_{1,p}(\tilde{u},t)$ is non-increasing under the rescaled inverse mean curvature flow \eqref{flow2}.

Obviously, $e^{-p(\frac{1}{n}-\varepsilon)t}\tilde{\lambda}_{1,p}(\tilde{u},t)$  is smooth w.r.t. time $t$ from the definition of $\tilde{\lambda}_{1,p}(\tilde{u},t)$ in \eqref{def2.7}, thus, at the neighbourhood of time $t_0$, namely, in the small interval $[t_0,t_0+\delta]$, we have
$$
e^{-p(\frac{1}{n}-\varepsilon)t_0}\tilde{\lambda}_{1,p}\left(\tilde{u}(\cdot,t_0),t_0\right)\geq e^{-p(\frac{1}{n}-\varepsilon)(t_0+\delta)}\tilde{\lambda}_{1,p}\left(\tilde{u}(\cdot,t_0+\delta),t_0+\delta\right).
$$
Noticing that $\tilde{u}=\tilde{u}(x,t_0)$ is the eigenfunction at time $t_0$ we have
$$\tilde{\lambda}_{1,p}(\tilde{u}(\cdot,t_0),t_0)=\tilde{\lambda}_{1,p}(t_0),$$
and
$$\tilde{\lambda}_{1,p}(\tilde{u}(\cdot,t_0+\delta),t_0+\delta)\geq\tilde{\lambda}_{1,p}(t_0+\delta)$$
by the definitions of $\lambda_{1,p}(t)$ in $\eqref{df1}$ and $\lambda_{1,p}(u,t)$ in \eqref{def2.7}.
Combination of the above inequalities yields
$$
e^{-p(\frac{1}{n}-\varepsilon)t_0}\tilde{\lambda}_{1,p}\left(t_0\right)\geq e^{-p(\frac{1}{n}-\varepsilon)(t_0+\delta)}\tilde{\lambda}_{1,p}\left(t_0+\delta\right),
$$
i.e., $e^{-p(\frac{1}{n}-\varepsilon)t}\tilde{\lambda}_{1,p}(t)$ is non-increasing under the rescaled inverse mean curvature flow \eqref{flow2}, and the differentiability of $\tilde{\lambda}_{1,p}(t)$ can be proved by Lebesgue's theorem.

The responding result to $\tilde\lambda_1(t)$ can be obtained by a more easier procedure. So we complete the proof of Proposition 4.1. $\hfill\square$

\vspace{3mm}
\noindent
\emph{\textbf{Proof of Theorem \ref{th1.1}}}

As we have proved the differentiability of $\tilde{\lambda}_{1,p}(t)$ under the rescaled inverse mean curvature flow \eqref{flow2}, we can replace $\tilde{\lambda}_{1,p}(\tilde{u},t)$ with $\tilde{\lambda}_{1,p}(t)$ in \eqref{4.26}, and combine \eqref{3.12} in Theorem 3.7 to give

\begin{equation}\label{4.29}
\frac{d\tilde{\lambda}_{1,p}(t)}{dt}\leq p\left(\frac{1}{n}-\varepsilon(t)\right)\tilde{\lambda}_{1,p}(t).
\end{equation}
Integrating both sides of \eqref{4.29} in $[0,~t]$, we have
\begin{equation*}
\begin{split}
\tilde{\lambda}_{1,p}(t)&\leq\tilde\lambda_{1,p}(0)\exp\left[p\int^t_0\left(\frac{1}{n}-\varepsilon(t)\right)dt\right]\\&
=\tilde\lambda_{1,p}(0)\exp\left[-\frac{p}{\alpha}\exp(-\alpha t+\beta)+\frac{p}{\alpha}\left(\frac{1}{n}-\frac{\alpha}{2}\right)\right].
\end{split}
\end{equation*}

We know from Theorem 3.3 that $\tilde M^n_t$ tends to some sphere $\mathbb{S}^n(R)$ which has the same area with $\tilde M^n_t$ when  time $t$ tends to infinity, hence, at $t=\infty$
\begin{equation}\label{4.30}
\tilde{\lambda}_{1,p}(\mathbb{S}^n(R))\leq\tilde{\lambda}_{1,p}(\tilde M_0)\exp\left[\frac{p}{\alpha}\left(\frac{1}{n}-\frac{\alpha}{2}\right)\right].
\end{equation}

From above all process, the equality holds iff $h_{ij}=\varepsilon(0)Hg_{ij}=\frac{\alpha}{2}Hg_{ij}$ at the initial time, i.e., $\frac{\alpha}{2}=\frac{1}{n}$, which means the initial hypersurface $\tilde M^n_0=M^n$ is the sphere $ \mathbb{S}^n(R)$.

The corresponding result to $\tilde{\lambda}_1(t)$ can be obtained by an almost same procedure.
So we complete the proof of Theorem \ref{th1.1}. $\hfill\square$

\end{document}